\numberwithin{equation}{section}
\numberwithin{figure}{section}
  \theoremstyle{remark}
  \newtheorem{rem}{\protect\remarkname}
\theoremstyle{plain}
\newtheorem{thm}{\protect\theoremname}
  \theoremstyle{plain}
  \newtheorem{cor}{\protect\corollaryname}
  \theoremstyle{plain}
  \newtheorem{lem}{\protect\lemmaname}
  \theoremstyle{plain}
  \newtheorem{prop}{\protect\propositionname}
  \providecommand{\lemmaname}{Lemma}
  \providecommand{\propositionname}{Proposition}
  \providecommand{\remarkname}{Remark}
\providecommand{\corollaryname}{Corollary}
\providecommand{\theoremname}{Theorem}
\begin{document}

\title{Aging Uncoupled Continuous Time Random Walk Limits}

\author{Ofer Busani}

\maketitle
\begin{minipage}[t]{1\columnwidth}%
Department of mathematics \\
Bar-Ilan University, Ramat Gan, Israel \\%
\end{minipage}

\begin{abstract}
Aging is a prevalent phenomenon in physics, chemistry and many other
fields. In this paper we consider the aging process of uncoupled Continuous
Time Random Walk Limits (CTRWLs) which are Levy processes time changed
by the inverse stable subordinator of index $0<\alpha<1$. We apply
a recent method developed by Meerscheart and Straka of finding the
finite dimensional distributions of CTRWL, to obtaining the aging
process\textquoteright s finite dimensional distributions, self-similarity-like
property, asymptotic behavior and its Fractional Fokker-Planck equation(FFPE). 
\end{abstract}

\global\long\def\integral#1#2{{\displaystyle \intop_{#1}^{#2}}}

\section{Introduction\label{sec:Introduction}}

Continuous time random walks(CTRW) are widely used in physics and
mathematical finance to model a random walk for which the waiting
times between jumps are random which in many cases better describes
phenomena in these fields. CTRWLs are used to model anomalous diffusion,
where the squared averaged distance of the process from the origin
is no longer proportional to the time index $t$. A related concept
and widely studied (\cite{struik1978physical,Schulz2013a}) in statistical
physics, is aging. Suppose the CTRW $X_{t}$ starts at $t=0$ and
evolves until time $t_{0}>0$ when we then start to measure it. One
can consider the varying dynamics of the new process $X_{t}^{t_{0}}=X_{t+t_{0}}-X_{t_{0}}$
as $t_{0}$ varies and the process ages. In \cite{monthus1996models}
Monthus and Bouchaud studied a CTRW with aging properties. In \cite{Barkai2003}
Barkai and Cheng considered the Aging Continuous Time Random Walk
(ACTRW) which is an uncoupled CTRW with iid power law waiting times,
that started at $t=0$ and is observed at $t=t_{0}$. They found the
one dimensional distribution of the process $X_{t}^{t_{0}}$ which
they referred to as the ACTRW, for $t_{0}$ and $t$ large. In \cite{Barkai2003a},
Barkai found the Fractional Fokker-Planck Equation (FFPE) for the
unnormalized pdf of the process $X_{t}^{t{}_{0}}$ for  $t_{0}$ and
$t$ large.\\
In this paper we wish to give analogous results to the ones given
in \cite{Barkai2003,Barkai2003a} as well as new ones for a large
class of CTRWLs which hopefully will lay the foundation for further
study of their aging. We consider the class that consists of all processes
of the form $Y_{t}=A_{E_{t}}$ where $A_{t}$ is a Levy process that
is time changed by the inverse of an independent stable subordinator
of index $0<\alpha<1$ ; we denote this class by $\mathcal{S}$. We
denote the aging process by $Y_{t}^{t_{0}}=Y_{t+t_{0}}-Y_{t_{0}}=A_{E_{t+t_{0}}}-A_{E_{t_{0}}}$
(note that $Y_{t}^{0}=Y_{t}$). Section \ref{sec:Finite-dimensional-distribution}
is devoted to a brief review of the theory and method introduced by
Meerschaert and Straka in \cite{Meerschaert} and \cite{Meerschaert2012}
upon which we base our results. In Section \ref{sec:aging} we give
the main result of this paper, that the finite dimensional distributions
of the process $Y_{t}^{t_{0}}$ can be obtained by a convolution in
time of the finite dimensional distributions of $Y_{t}$ and a generalized
beta prime distribution. The self-similarity-like property of the
process $Y_{t}^{t_{0}}$ is obtained in Section \ref{sec:Aging-self-similarity}.
In Section \ref{sec:Asymptotic-behavior-and} we obtain results on
the asymptotic behavior of the distribution of $Y_{t}^{t_{0}}$ when
$t_{0}$ is far from the origin as well as when $\alpha\rightarrow1$
and the governing equation of $Y_{t}^{t_{0}}$. 

One example for a process that lies in $\mathcal{S}$ is the Fractional
Poisson Process(FPP) which we denote by $N_{t}^{\alpha}$. The FPP
is a renewal process with interarrival times $W_{n}$ such that $P\left(W_{1}>t\right)=E_{\alpha}\left(-\lambda t^{\alpha}\right)$
where 
\[
E_{\alpha}\left(z\right)=\sum_{k=0}^{\infty}\frac{z^{k}}{\Gamma\left(\alpha k+1\right)}
\]
is the Mittag-Leffler function. Since the interarrival times are not
exponentially distributed the process $N_{t}^{\alpha}=\sup\left\{ k:T_{k}\leq t\right\} $,
where $T_{k}=\sum_{i=1}^{k}W_{i}$ are the arrival times, is not Markovian
and the calculation of the finite dimensional distributions of $N_{t}^{\alpha}$
is no longer straightforward. The FPP was first studied in \cite{Laskin2003},\cite{Jumarie2001}
and \cite{Mainardi2007a,Mainardi2007}. In \cite{Beghin2009} an integral
representation of the one dimensional distribution of the FPP was
given and was used in \cite{Politi2011} to find and simulate the
finite dimensional distributions of the FPP. In \cite{Meerschaert2011},
it was shown that $N_{t}^{\alpha}=N_{E_{t}}$ where $N_{t}$ is a
Poisson process and $E_{t}$ is the inverse of a standard stable subordinator
of index $0<\alpha<1$ independent of $N_{t}$. 

Since the distribution of the increments (and therefore the aging
process) of the CTRWL is closely related to the two dimensional distributions,
their study is quite cumbersome. In a recent paper (\cite{Meerschaert}),
Meerscheart and Straka found a way of embedding CTRWLs in a larger
state space that renders these processes Markovian. We use this method
to find the finite dimensional distributions of the process $Y_{t}^{t_{0}}$,
its asymptotic behavior, self-similarity-like property and its FFPE.

\section{Finite dimensional distribution of CTRWL\label{sec:Finite-dimensional-distribution}}

CTRWL are usually not Markovian, a fact that makes the calculation
of their finite dimensional distributions quite difficult. It is therefore
that the distribution of the increments (which can be obtained by
the finite dimensional distributions) of the CTRWL is not well understood.
\\
Although the method in \cite{Meerschaert} is very general we focus
only on uncoupled CTRWLs which are Levy processes time changed by
the inverse of an independent stable subordinator. In order to facilitate
reading of this section and referring to the original paper we retain
most of the notation in \cite{Meerschaert}. The uncoupled CTRW we
consider consist of two independent sequences of iid r.vs, $\{W_{n}^{c}\}$
and $\{J_{n}^{c}\}$. The parameter $c$ is the convergence parameter
as in \cite{Meerschaert2008} which allows us to construct infinitesimal
triangular arrays. Here, $\{J_{n}^{c}\}$ represents the size of the
jumps of a particle in space, while $\{W_{n}^{c}\}$ represents the
waiting times between jumps. Hence, the time elapsed by the particle's
k'th jump is $T_{k}^{c}=D_{0}^{c}+\sum_{i=1}^{k}W_{i}^{c}$ and the
position of the particle is $S_{k}^{c}=A_{0}^{c}+\sum_{i=1}^{k}J_{i}^{c}$.
Let $L_{t}^{c}=\sup\{k:T_{k}^{c}\leq t\}$ be the number of jumps
until time $t$, then the CTRW $Y_{t}^{c}$ is 
\[
Y_{t}^{c}=A_{0}^{c}+\sum_{i=1}^{L_{t}^{c}}J_{i}^{c}.
\]
Assume we have
\begin{equation}
\left(S_{\left[cu\right]}^{c},T_{\left[cu\right]}^{c}\right)=\left(A_{0}^{c},D_{0}^{c}\right)+\sum_{i=1}^{\left[cu\right]}\left(J_{i}^{c},W_{i}^{c}\right)\Rightarrow\left(A_{u},D_{u}\right)\label{eq:CTRW limit condition}
\end{equation}
where $\Rightarrow$ denotes convergence in the Skorokhod $J_{1}$
topology. In this paper we assume $D_{u}$ is a stable subordinator
of index $0<\alpha<1$ starting from $D_{0}$, i.e, $E\left(e^{-s\left(D_{u}-D_{0}\right)}\right)=e^{-uCs^{\alpha}}$,
where $C$ is a constant. This can be achieved by assuming $W_{i}^{c}=c^{-\frac{1}{\alpha}}W_{i}$
where $\left\{ W_{i}\right\} $ are independent random variables that
are in the strict domain of attraction of $D_{1}-D_{0}$. Note that
$A_{t}-A_{0}$ is a L�vy process as it is the limit of a triangular
array. Now, let $E_{t}=\inf\{s:D_{s}>t\}$ be the first hitting time
of $D_{t}$, also called the inverse of $D_{t}$. By \cite[Theorem 2.4.3]{Straka2011diss}
applied to the case of independent space and time jumps we have
\begin{equation}
Y_{t}^{c}\Rightarrow Y_{t}=A_{E_{t}},\label{eq:CTRW limit theorem}
\end{equation}
as $c\rightarrow\infty$ where convergence is in the Skorokhod $J_{1}$
topology, see also \cite[theorem 3.6]{straka2011} and \cite[Theorem 3.1]{Jurlewicz2012}.
Since $\left(S_{k}^{c},T_{k}^{c}\right)$ is a Markov chain for all
$c>0$ it follows that the CTRWL $Y_{t}$ is a semi-Markov process
and it is possible to embed it in a process of larger state space
that includes the time to regeneration, the \emph{remaining life time
process} $R_{t}$\emph{.} More precisely, let $\mathbb{D}\left([0,\infty),\mathbb{R}^{2}\right)$
be the space of c�dl�g functions $f:[0,\infty)\rightarrow\mathbb{R}^{2}$
with the $J_{1}$ Skorokhod topology which is endowed with transition
operators $T_{u}$, $u>0$ and hence a probability measure $P^{\chi,\tau}$
such that trajectories start at point $\left(\chi,\tau\right)$ with
probability one. Thus, we have a stochastic basis $\left(\Omega,\mathcal{F}_{\infty},\left(\mathcal{F}_{u}\right)_{u\geq0},P^{\chi,\tau}\right)$,
where each element of $\Omega$ is in $\mathbb{D}\left([0,\infty),\mathbb{R}^{2}\right)$,
$\mathcal{F}_{u}=\sigma\left(\left(A_{u}\left(\omega\right),D_{u}\left(\omega\right)\right)\right)$
and $\mathcal{F}_{\infty}=\vee_{u>0}\mathcal{F}_{u}$. The process
$\left(A,D\right)_{t}$ has a generator of the form
\begin{multline}
\mathcal{A}\left(f\right)\left(x,t\right)=b\frac{\partial f(x,t)}{\partial x}-\frac{1}{2}a\frac{\partial^{2}f(x,t)}{\partial x^{2}}\\
+\intop_{\mathbb{R}^{2}}\left(f\left(x+y,t+w\right)-f\left(x,t\right)-y\frac{\partial f(x,t)}{\partial x}1_{\{\left|\left(y,w\right)\right|<1\}}\right)K\left(dy,dw\right),\label{eq:General generator}
\end{multline}
where $a>0$ and $b\in\mathbb{R}$ and $K\left(dy,dw\right)$ is a
L�vy measure. The \emph{occupation time measure} of the process $\left(A,D\right)_{t}$
is the average time spent by the process in a given Borel set in $\mathbb{R}^{2}$,
i.e
\[
\intop f\left(x,t\right)U^{\chi,\tau}\left(dx,dt\right)=\mathbb{E}^{\chi,\tau}\left(\int_{0}^{\infty}f\left(A_{u},D_{u}\right)du\right)=\intop_{0}^{\infty}T_{u}f\left(\chi,\tau\right)du.
\]
{} Let us now define the \emph{remaining life time process} $R_{t}$
\[
R_{t}=D_{E_{t}}-t,
\]
which is the time left for the process $Y_{t}$ to leave its current
state. It was proven in \cite[Theorem 2.3]{Meerschaert} that
\begin{multline}
E^{\chi,\tau}\left(f\left(Y_{t},R_{t}\right)\right)=\\
\integral{x\in\mathbb{R}}{}\integral{\,s\in[\tau,t]}{}U^{\chi,\tau}\left(dx,ds\right)\integral{y\in\mathbb{R}}{}\integral{\,w\in[t-s,\infty)}{}K\left(dy,dw\right)f\left(x+y,w-\left(t-s\right)\right).\label{eq:general expectation of (Y_t,R_t)}
\end{multline}
In \cite{Meerschaert}, a more general CTRWL is considered and hence
a more general form of (\ref{eq:General generator}) where the coefficients
$a$ and $b$ as well as the L�vy measure $K\left(dy,dw\right)$ are
allowed to be dependent on the position of the CTRWL in space and
time, that is, we have $b\left(x,t\right),a\left(x,t\right)$ and
$K\left(x,t;dy,dw\right)$. As was noted in \cite[section 4]{Meerschaert},
when these coefficients do not depend on $t$ (as in our case), the
process $\left(Y_{t},R_{t}\right)$ is a homogeneous Markov process.
More precisely, we define
\begin{align}
Q_{t}\left[f\right]\left(y,0\right) & =E^{y,0}\left(f\left(Y_{t},R_{t}\right)\right)\label{eq:general transition operators1}\\
Q_{t}\left[f\right]\left(y,r\right) & =1_{\{0\leq t<r\}}f\left(y,r-t\right)+1_{\{0\leq r\leq t\}}Q_{t-r}\left[f\right]\left(y,0\right)\qquad r>0,\label{eq:general transition operators2}
\end{align}
for every $f$ bounded and measurable on $\mathbb{R}\times[0,\infty)$.
$Q_{t}$ is the transition operator of the Markov process $\left(Y_{t},R_{t}\right)$
starting at $\chi,\tau$, i.e
\begin{equation}
E^{\chi,\tau}\left(f\left(Y_{t+h},R_{t+h}\right)\mid\sigma\left(\left(Y_{r},R_{r}\right),t\geq r\geq0\right)\right)=Q_{h}\left[f\right]\left(Y_{t},R_{t}\right).\label{eq:Markov property of (Y_t,R_t)}
\end{equation}
\\
One can use the Chapman-Kolmogorov\textquoteright s equation to obtain
the finite dimensional distributions of the process $Y_{t}$. For
example, suppose $\left(Y_{0},R_{0}\right)=\left(0,0\right)$ a.s,
then for the two dimensional distribution of the process $Y_{t}$
at times $t_{1}<t_{2}$ we have
\begin{multline}
P\left(Y_{t_{1}}\in B_{1},Y_{t_{2}}\in B_{2}\right)=P\left(\left(Y_{t_{1}}\in B_{1},R_{t_{1}}\in[0,\infty)\right),\left(Y_{t_{2}}\in B_{2},R_{t_{2}}\in[0,\infty)\right)\right)\\
=Q_{t_{1}}\left[1_{\left\{ B_{1}\times\mathbb{R}\right\} }\left(y_{1},r_{1}\right)Q_{t_{2}-t_{1}}\left[1_{\left\{ B_{2}\times\mathbb{R}\right\} }\left(y_{2},r_{2}\right)\right]\left(y_{1},r_{1}\right)\right]\left(0,0\right),\label{eq:Two dimensiional distribution}
\end{multline}
where $B_{1},B_{2}\in\mathbb{\mathcal{B}\left(R\right)}$ are Borel
sets.
\begin{rem}
In \cite{Meerschaert} a result stronger than (\ref{eq:Markov property of (Y_t,R_t)})
was shown. Indeed, the process $\left(Y_{t},R_{t}\right)$ is a strong
Markov process with respect to a filtration larger than the natural
filtration. For the sake of brevity and the fact that the Markov property
is adequate for our work we brought the result in a weaker form.
\end{rem}

\section{Aging\label{sec:aging} }

Let us assume (\ref{eq:CTRW limit condition}) holds with $\chi=\tau=0$
so $A_{t}$ is a Levy process with CDF $P_{t}\left(x\right)=P\left(A_{t}\in(-\infty,x]\right)$
and with Levy triplet $\left(\mu,A,\phi\right)$, i.e
\[
E\left(e^{iuA_{t}}\right)=\exp\left[t\left(i\mu u-\frac{1}{2}Au^{2}+\integral{\mathbb{R}}{}\left(e^{iuy}-1-iuy1_{\{\left|y\right|<1\}}\right)\phi\left(dy\right)\right)\right].
\]
 Also assume $D_{t}$ is a stable subordinator of index $0<\alpha<1$
with Laplace transform (LT) $E\left(e^{-uD_{t}}\right)=e^{-tcu^{\alpha}}$
independent of $A_{t}$. Then (\ref{eq:General generator}) holds
with $b=\mu$, $a=A$ and (see \cite[Corollary 2.3]{becker2004limit})
\\
\begin{equation}
K\left(dy,dw\right)=\phi\left(dy\right)\delta_{0}\left(dw\right)+\delta_{0}\left(dy\right)\frac{c\alpha}{\Gamma\left(1-\alpha\right)}w^{-1-\alpha}1_{\{w>0\}}dw.\label{eq:S class Levy measure}
\end{equation}
 %
Next, we wish to find the occupation measure of the process $\left(A,D\right)_{t}$.
We have for $f\left(y,w\right)=1_{\{(-\infty,x]\times(-\infty,t]\}}\left(y,w\right)$
\begin{alignat*}{1}
\intop f\left(y,w\right)U^{\chi,\tau}\left(dy,dw\right) & =\mathbb{E}^{\chi,\tau}\left(\integral 0{\infty}f\left(A_{u},D_{u}\right)du\right)\\
=\integral 0{\infty}T_{u}f\left(\chi,\tau\right)du & =\integral 0{\infty}T_{u}1_{\{(-\infty,x]\times(-\infty,t]\}}\left(\chi,\tau\right)du\\
 & =\integral 0{\infty}\integral{w\in\mathbb{R}}{}\integral{\,y\in\mathbb{R}}{}1_{\{(-\infty,x]\times(-\infty,t]\}}\left(y+\chi,w+\tau\right)q_{u}(dy,dw)du,
\end{alignat*}
where $q_{t}$ is the distribution of the process $\left(A,D\right)_{t}$
cf. \cite[Eq. 3.11]{Applebaum2009}. By independence of $A_{t}$ and
$D_{t}$ we have
\begin{alignat}{1}
\intop f\left(x,t\right)U^{\chi,\tau}\left(dx,dt\right) & =\intop_{0}^{\infty}P\left(A_{u}\in(-\infty,x-\chi]\right)P\left(D_{u}\in(-\infty,t-\tau]\right)du\nonumber \\
 & =\intop_{0}^{\infty}P_{u}\left(x-\chi\right)\intop_{-\infty}^{t-\tau}g\left(w,u\right)dwdu,\label{eq:implicit occupation measure}
\end{alignat}
where $g\left(x,t\right)$ is the pdf of $D_{t},$ i.e $g\left(x,t\right)dx=P\left(D_{t}\in dx\right)$
and is known to be absolutely continuous with respect to the Lebesgue
measure \cite[Section 2.4]{Zolotarev86}. 

Since $\left(A,D\right)_{t}$ is a Levy process the coefficients in
(\ref{eq:General generator}) are independent of $t$ and therefore
the process $\left(A,D\right)_{t}$ is a Markov additive process \cite[Section 4]{Meerschaert}
and the occupation measure is of the form 
\begin{equation}
U^{y}\left(dx,dt\right)=\intop_{0}^{\infty}P_{u}\left(dx-y\right)g\left(t,u\right)dudt.\label{eq:S class occupation measure}
\end{equation}
Furthermore, one may choose $\tau=0$ and plug (\ref{eq:S class Levy measure})
and (\ref{eq:S class occupation measure}) in (\ref{eq:general expectation of (Y_t,R_t)})
to obtain
\begin{alignat}{1}
E^{\chi,0}\left(f\left(Y_{t},R_{t}\right)\right) & =\integral{x\in\mathbb{R}}{}\integral{\,s\in[0,t]}{}\left(\integral{u\in\mathbb{R}^{+}}{}P_{u}\left(dx-\chi\right)g\left(s,u\right)du\right)\label{eq:Expectation for S class}\\
 & \times\integral{y\in\mathbb{R}}{}\integral{\,w\in[t-s,\infty)}{}\left(\phi\left(dy\right)\delta_{0}\left(dw\right)+\delta_{0}\left(dy\right)\frac{c\alpha}{\Gamma\left(1-\alpha\right)}w{}^{-1-\alpha}dw\right)f\left(x+y,w-\left(t-s\right)\right)ds\nonumber \\
 & =\integral{x\in\mathbb{R}}{}\integral{\,s\in[0,t]}{}\left(\integral{u\in\mathbb{R}^{+}}{}P_{u}\left(dx-\chi\right)g\left(s,u\right)du\right)\nonumber \\
 & \times\integral{\,w\in[t-s,\infty)}{}f\left(x,w-\left(t-s\right)\right)\frac{c\alpha}{\Gamma\left(1-\alpha\right)}w{}^{-1-\alpha}dwds,\nonumber 
\end{alignat}
for $Y_{t}\in\mathcal{S}$ and its time to regeneration $R_{t}$.
\\
 We say that the r.v $X$ has beta distribution with parameters $\mu,\nu>0$
if it has pdf of the form
\[
f\left(x,\mu,\nu\right)=\frac{x^{\mu-1}\left(1-x\right)^{\nu-1}}{B\left[\mu,\nu\right]}\qquad x\in\left(0,1\right)
\]
where $B\left[\mu,\nu\right]=\frac{\Gamma\left(\mu\right)\Gamma\left(\nu\right)}{\Gamma\left(\mu+\nu\right)}$
is the Beta function and we write $X\sim B\left(\mu,\nu\right)$ .
We say that the r.v $X$ has beta prime distribution with parameters
$\mu,\nu>0$ if it has pdf of the form
\begin{equation}
f\left(x,\mu,\nu\right)=\frac{x^{\mu-1}\left(1+x\right)^{-\mu-\nu}}{B\left[\mu,\nu\right]}\qquad x>0\label{eq:Beta prime distribution}
\end{equation}
and we write $X\sim B'\left(\mu,\nu\right)$. It was noted in \cite[II.4]{feller1971introduction}
that if $X\sim B\left(\mu,\nu\right)$ then $\frac{X}{1-X}\sim B'\left(\mu,\nu\right)$.
The distribution (\ref{eq:Beta prime distribution}) can be further
generalized to the so called \emph{generalized Beta prime distribution}
also known as the general Beta of the second kind distribution whose
pdf is 
\begin{equation}
f\left(x,\mu,\nu,h\right)=\frac{\left(\frac{x}{h}\right)^{\mu-1}\left(1+\frac{x}{h}\right)^{-\mu-\nu}}{h\cdot B\left[\mu,\nu\right]}\qquad x>0\label{eq:generalized beta prime distribution}
\end{equation}
with $h,\mu,\nu>0$ . If $X$ has generalized Beta prime distribution
of the form (\ref{eq:generalized beta prime distribution}) then we
write $X\sim GB2\left(\mu,\nu,h\right)$.

\begin{thm}
\label{thm: aging property}Let $Y_{t}^{t_{0}}=A_{E_{t+t_{0}}}-A_{E_{t_{0}}}$
where $t_{0}>0$ be the aging process. Let $B_{1},B_{2},...,B_{k}$
be Borel sets such that $0\notin B_{1}$. Let $p_{t_{0}}\left(r\right)=f\left(r,1-\alpha,\alpha,t_{0}\right)$
be a generalized beta prime distribution as in (\ref{eq:generalized beta prime distribution}).
Then we have for $0<t_{1}<t_{2}<\cdots<t_{k}$
\begin{equation}
P\left(Y_{t_{1}}^{t_{0}}\in B_{1},Y_{t_{2}}^{t_{0}}\in B_{2},...,Y_{t_{k}}^{t_{0}}\in B_{k}\right)=\integral 0{t_{1}}P\left(Y_{t_{1}-r}\in B_{1},Y_{t_{2}-r}\in B_{2},...,Y_{t_{k}-r}\in B_{k}\right)p_{t_{0}}\left(r\right)dr.\label{eq:Aging result}
\end{equation}
 \end{thm}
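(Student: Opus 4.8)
The plan is to condition on the state of the homogeneous Markov process $\left(Y_{u},R_{u}\right)$ at the observation time $u=t_{0}$, and to factor the computation into two pieces: a deterministic \emph{restart} identity, and the explicit law of the overshoot $R_{t_{0}}$. \emph{Step 1: conditioning and the restart identity.} By the Markov property (\ref{eq:Markov property of (Y_t,R_t)}) of $\left(Y_{u},R_{u}\right)$ together with spatial homogeneity of $\left(Y,R\right)$ in the $Y$-variable (the coefficients in (\ref{eq:General generator}), specialised through (\ref{eq:S class Levy measure}), do not depend on $x$; equivalently $A$ is a Lévy process independent of $E$), the conditional law of $\left(Y_{t_{1}}^{t_{0}},\dots,Y_{t_{k}}^{t_{0}}\right)$ given $\left(Y_{t_{0}},R_{t_{0}}\right)=\left(y,r\right)$ does not depend on $y$ and equals the law of $\left(Y_{t_{1}},\dots,Y_{t_{k}}\right)$ under $P^{0,r}$. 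Now I use the transition rule (\ref{eq:general transition operators2}): started from $\left(0,r\right)$ the process sits at the origin until the deterministic time $r$ and then regenerates, i.e. $Q_{t}\left[f\right]\left(0,r\right)=Q_{t-r}\left[f\right]\left(0,0\right)$ for $t\geq r$. Feeding this into the $k$-fold Chapman--Kolmogorov expansion analogous to (\ref{eq:Two dimensiional distribution}), and using that the spacings $t_{j+1}-t_{j}$ are unchanged under a common time shift, one obtains, for $r\leq t_{1}$,
\[ P^{0,r}\!\left(Y_{t_{1}}\in B_{1},\dots,Y_{t_{k}}\in B_{k}\right)=P\!\left(Y_{t_{1}-r}\in B_{1},\dots,Y_{t_{k}-r}\in B_{k}\right), \]
while for $r>t_{1}$ one has $Y_{t_{1}}=0$ under $P^{0,r}$, so this probability vanishes because $0\notin B_{1}$ (this is the only place the hypothesis on $B_{1}$ enters). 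Taking expectations over $R_{t_{0}}$,
\[ P\!\left(Y_{t_{1}}^{t_{0}}\in B_{1},\dots,Y_{t_{k}}^{t_{0}}\in B_{k}\right)=\integral 0{t_{1}}P\!\left(Y_{t_{1}-r}\in B_{1},\dots,Y_{t_{k}-r}\in B_{k}\right)P\!\left(R_{t_{0}}\in dr\right), \]
the integration range collapsing to $\left(0,t_{1}\right]$ exactly by the previous sentence.

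\emph{Step 2: the law of $R_{t_{0}}$.} It remains to identify $P\left(R_{t_{0}}\in dr\right)=p_{t_{0}}\left(r\right)dr$. I apply the master formula (\ref{eq:Expectation for S class}) to a test function of the form $f\left(y,w\right)=h\left(w\right)$. Integrating out the space variable, via $\integral{\mathbb{R}}{}P_{u}\left(dx-\chi\right)=1$, collapses (\ref{eq:Expectation for S class}) to
\[ E\!\left(h\left(R_{t_{0}}\right)\right)=\integral 0{t_{0}}\Bigl(\integral 0{\infty}g\left(s,u\right)du\Bigr)\integral{t_{0}-s}{\infty}h\!\left(w-\left(t_{0}-s\right)\right)\frac{c\alpha}{\Gamma\left(1-\alpha\right)}w^{-1-\alpha}\,dw\,ds. \]
The inner integral is the potential (renewal) density of the $\alpha$-stable subordinator $D$: since $\integral 0{\infty}E\!\left(e^{-\lambda D_{u}}\right)du=\integral 0{\infty}e^{-uc\lambda^{\alpha}}\,du=\left(c\lambda^{\alpha}\right)^{-1}=\integral 0{\infty}e^{-\lambda s}\,\frac{s^{\alpha-1}}{c\,\Gamma\left(\alpha\right)}\,ds$, one reads off $\integral 0{\infty}g\left(s,u\right)du=s^{\alpha-1}/\left(c\,\Gamma\left(\alpha\right)\right)$, and the constants combine to $\alpha/B\left[\alpha,1-\alpha\right]$ (using $\Gamma\left(\alpha\right)\Gamma\left(1-\alpha\right)=B\left[\alpha,1-\alpha\right]$). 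After the substitution $r=w-\left(t_{0}-s\right)$ and Tonelli's theorem, the density of $R_{t_{0}}$ is
\[ \frac{\alpha}{B\left[\alpha,1-\alpha\right]}\integral 0{t_{0}}s^{\alpha-1}\left(t_{0}-s+r\right)^{-1-\alpha}\,ds, \]
and a routine Beta-function integral evaluates this to $\frac{\left(r/t_{0}\right)^{-\alpha}\left(1+r/t_{0}\right)^{-1}}{t_{0}\,B\left[\alpha,1-\alpha\right]}=f\left(r,1-\alpha,\alpha,t_{0}\right)=p_{t_{0}}\left(r\right)$, where $B\left[\alpha,1-\alpha\right]=B\left[1-\alpha,\alpha\right]$. (Equivalently, $R_{t_{0}}/t_{0}$ is the classical overshoot ratio of the stable subordinator.)

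Substituting Step 2 into the displayed identity of Step 1 gives (\ref{eq:Aging result}). The two points I expect to require care are, first, the reduction in Step 1 — that conditioning on $\left(Y_{t_{0}},R_{t_{0}}\right)$ turns the increments $Y_{t+t_{0}}-Y_{t_{0}}$ into a copy of $Y$ started from $\left(0,R_{t_{0}}\right)$ — which rests on $\left(Y,R\right)$ being a \emph{space-homogeneous} Markov process, a fact already furnished by Section \ref{sec:Finite-dimensional-distribution} together with the $x$-independence of (\ref{eq:S class Levy measure}); and, second, the Beta-function evaluation in Step 2, which is the single genuinely computational ingredient and the natural place for an auxiliary lemma.
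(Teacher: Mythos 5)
Your proof is correct and follows essentially the same route as the paper: translation invariance of $Q_{t}$ plus the transition rule (\ref{eq:general transition operators2}) to reduce to a time-shifted probability (with $0\notin B_{1}$ killing the $r>t_{1}$ contribution), then the master formula (\ref{eq:Expectation for S class}), the potential density $s^{\alpha-1}/(c\,\Gamma(\alpha))$, and the same Beta-type integral. Your only reorganization is to name the intermediate object as the law of $R_{t_{0}}$ and integrate against it, which is precisely the alternative phrasing the paper itself offers in the remark following the theorem (cf.\ (\ref{eq:the pdf of R_t})).
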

\begin{proof}
For simplicity, we proof the result for $k=2$ , the proof for $k>2$
is similar. We have
\begin{alignat}{1}
P\left(Y_{t_{1}}^{t_{0}}\in B_{1},Y_{t_{2}}^{t_{0}}\in B_{2}\right) & =Q_{t_{0}}\left[1_{\left\{ \mathbb{R}\times\mathbb{R}\right\} }\left(y_{0},r_{0}\right)\right.\nonumber \\
 & \times\left.Q_{t_{1}}\left[1_{\left\{ B_{1}+y_{0}\times\mathbb{R}\right\} }\left(y_{1},r_{1}\right)Q_{t_{2}-t_{1}}\left[1_{\left\{ B_{2}+y_{0}\times\mathbb{R}\right\} }\left(y_{2},r_{2}\right)\right]\left(y_{1},r_{1}\right)\right]\left(y_{0},r_{0}\right)\right]\left(0,0\right).\label{eq:two dimensional distribution}
\end{alignat}
It is easy to see that by (\ref{eq:Expectation for S class}) the
semi-group operator $Q_{t}$ is translation invariant with respect
to the space variable when $r=0$, i.e, $Q_{t}\left[f\right]\left(y+a,0\right)=Q\left[g\right]\left(y,0\right)$
where $g\left(y,r\right)=f\left(y+a,r\right)$. Moreover, 
\begin{alignat*}{1}
Q_{t}\left[f\right]\left(y+a,r\right) & =1_{\left\{ 0\leq t<r\right\} }f\left(y+a,r-t\right)+1_{\left\{ 0\leq r\leq t\right\} }Q_{t-r}\left[f\right]\left(y+a,0\right)\\
 & =1_{\left\{ 0\leq t<r\right\} }g\left(y,r-t\right)+1_{\left\{ 0\leq r\leq t\right\} }Q_{t-r}\left[g\right]\left(y,0\right)\\
 & =Q_{t}\left[g\right]\left(y,r\right).
\end{alignat*}
Hence, $Q_{t}$ is translation invariant with respect to the space
variable. Consequently, since $0\notin B_{1}$, by (\ref{eq:general transition operators2})
we have

\begin{alignat}{1}
 & Q_{t_{1}}\left[1_{\left\{ B_{1}+y_{0}\times\mathbb{R}\right\} }\left(y_{1},r_{1}\right)Q_{t_{2}-t_{1}}\left[1_{\left\{ B_{2}+y_{0}\times\mathbb{R}\right\} }\left(y_{2},r_{2}\right)\right]\left(y_{1},r_{1}\right)\right]\left(y_{0},r_{0}\right)\nonumber \\
 & =1_{\left\{ 0\leq r_{0}\leq t_{1}\right\} }Q_{t_{1}-r_{0}}\left[1_{\left\{ B_{1}+y_{0}\times\mathbb{R}\right\} }\left(y_{1}+y_{0},r_{1}\right)Q_{t_{2}-t_{1}}\left[1_{\left\{ B_{2}+y_{0}\times\mathbb{R}\right\} }\left(y_{2},r_{2}\right)\right]\left(y_{1}+y_{0},r_{1}\right)\right]\left(0,0\right)\nonumber \\
 & =1_{\left\{ 0\leq r_{0}\leq t_{1}\right\} }Q_{t_{1}-r_{0}}\left[1_{\left\{ B_{1}\times\mathbb{R}\right\} }\left(y_{1},r_{1}\right)Q_{t_{2}-t_{1}}\left[1_{\left\{ B_{2}+y_{0}\times\mathbb{R}\right\} }\left(y_{2}+y_{0},r_{2}\right)\right]\left(y_{1},r_{1}\right)\right]\left(0,0\right)\nonumber \\
 & =1_{\left\{ 0\leq r_{0}\leq t_{1}\right\} }P\left(Y_{t_{1}-r_{0}}\in B_{1},Y_{t_{2}-r_{0}}\in B_{2}\right).\label{eq:two dimensional distribution2}
\end{alignat}
For ease of notation we write $P\left(Y_{t_{1}-r_{0}}\in B_{1},Y_{t_{2}-r_{0}}\in B_{2}\right)=f\left(r_{0}\right)$.
Plug (\ref{eq:two dimensional distribution2}) in (\ref{eq:two dimensional distribution})
and use (\ref{eq:Expectation for S class}) to obtain,
\begin{alignat}{1}
P\left(Y_{t_{1}}^{t_{0}}\in B_{1},Y_{t_{2}}^{t_{0}}\in B_{2}\right) & =\integral{s'\in[0,t_{0}]}{}\left(\integral{u'\in\mathbb{R}^{+}}{}g\left(s',u'\right)du'\right)\label{eq:general increnents2}\\
 & \times\integral{w'\in[t_{0}-s',\infty)}{}\frac{c\alpha}{\Gamma\left(1-\alpha\right)}w{}^{-1-\alpha}dwds'\nonumber \\
 & \times\left[1_{\{0\leq w'-\left(t_{0}-s'\right)\leq t_{1}\}}\times f\left(w'-\left(t_{0}-s'\right)\right)\right]\nonumber 
\end{alignat}
\begin{alignat*}{1}
 & =\integral{s'\in[0,t_{0}]}{}\left(\integral{u'\in\mathbb{R}^{+}}{}g\left(s',u'\right)du'\right)\times\integral{w'\in[t_{0}-s',t_{1}+t_{0}-s')}{}f\left(w'-\left(t_{0}-s'\right)\right)\\
 & \times\frac{c\alpha}{\Gamma\left(1-\alpha\right)}w'^{-1-\alpha}dw'ds'.
\end{alignat*}
By \cite[Eq. 37.12]{sato1999levy} if $D_{t}$ is a stable subordinator
of index $0<\alpha<1$ with $E\left(e^{-uX_{t}}\right)=e^{-tcu^{\alpha}}$
and probability distribution $P\left(D_{t}\in dx\right)=g\left(x,t\right)dx$
then its potential density is given by 
\begin{equation}
v\left(s\right)=\integral{u\in\mathbb{R}^{+}}{}g\left(s,u\right)du=\frac{1}{c\Gamma\left(\alpha\right)}s^{\alpha-1}\qquad s>0.\label{eq:potential density stable subordinator}
\end{equation}
 Substitute (\ref{eq:potential density stable subordinator}) in (\ref{eq:general increnents2})
and apply the change of variables $r=w'+s'-t_{0}$ to obtain
\begin{alignat}{1}
P\left(Y_{t_{1}}^{t_{0}}\in B_{1},Y_{t_{2}}^{t_{0}}\in B_{2}\right) & =\integral 0{t_{1}}\integral{s'\in[0,t_{0}]}{}\frac{s'^{\alpha-1}}{c\Gamma\left(\alpha\right)}f\left(r\right)\frac{c\alpha}{\Gamma\left(1-\alpha\right)}\left(r-s'+t_{0}\right)^{-1-\alpha}ds'dr.\label{eq:general increments}
\end{alignat}
Now apply the change of variables $v=s'\left(r-s'+t_{0}\right)^{-1}$
to compute the integral with respect to $s'$ and to obtain 
\begin{alignat*}{1}
P\left(Y_{t_{1}}^{t_{0}}\in B_{1},Y_{t_{2}}^{t_{0}}\in B_{2}\right) & =\integral 0{t_{1}}f\left(r\right)\frac{\left(\frac{r}{t_{0}}\right)^{-\alpha}\left(1+\frac{r}{t_{0}}\right)^{-1}}{t_{0}\cdot B\left[\alpha,1-\alpha\right]}dr\\
 & =\integral 0{t_{1}}P\left(Y_{t_{1}-r}\in B_{1},Y_{t_{2}-r}\in B_{2}\right)p_{t_{1}}\left(r\right)dr.
\end{alignat*}
\end{proof}
\begin{rem}
\label{remark:zero increment probability}It follows from Theorem
\ref{thm: aging property} that 
\begin{equation}
P\left(Y_{t}^{t_{0}}=0\right)=\intop_{t}^{\infty}p_{t_{0}}\left(r\right)dr+\integral 0tP\left(Y_{t-r}=0\right)p_{t_{0}}\left(r\right)dr>0.\label{eq:one dimensional zero probability}
\end{equation}
Therefore, the distribution of $Y_{t}^{t_{0}}$ has an atom at the
origin for every $t$. More interesting is the fact that if $P\left(A_{t}=0\right)=0$(this
is true for all processes with pdf) then $P\left(Y_{t}^{t_{0}}=0\right)$
does not depend on the choice of the process $A_{t}$. On the other
hand it can be easily seen that for every $t$ the process $Y_{t}^{t_{0}}$
has density on $\mathbb{R}\left\backslash \{0\}\right.$ given by
$p_{t_{0}}\left(x,t\right)=\integral 0tp\left(x,t-r\right)p_{t_{0}}\left(r\right)dr$
whenever $A_{t}$ has pdf $p\left(x,t\right)$. Furthermore, note
that the finite dimensional distributions of the process $Y^{t_{0}}$
on Borel sets $B_{1},...,B_{k}$ such that $0\notin B_{1}$, determine
completely the finite dimensional distributions of the process $Y^{t_{0}}$.
We demonstrate this for $k=2$; if $B_{2}$ is a Borel set then 
\[
P\left(Y_{t_{1}}^{t_{0}}=0,Y_{t_{2}}^{t_{0}}\in B_{2}\right)=P\left(Y_{t_{2}}^{t_{0}}\in B_{2}\right)-P\left(Y_{t_{1}}^{t_{0}}\in\mathbb{R}/\{0\},Y_{t_{2}}^{t_{0}}\in B_{2}\right),
\]
which by (\ref{eq:one dimensional zero probability}) determines the
two dimensional distributions completely.
\end{rem}

\begin{rem}
In \cite{Barkai2003}, a result similar to Theorem \ref{thm: aging property}
for the one dimensional distribution is obtained for CTRW for large
$t_{0}$ and $t$. The proof in \cite{Barkai2003} sheds light on
our result, as it was derived from showing that the distribution of
the first epoch $\tau_{1}$ of the aging CTRW $X_{t}^{t_{0}}$ has
beta prime distribution, i.e $\tau_{1}\sim B'\left(1-\alpha,\alpha,t_{0}\right)$.
This can be shown by a result by Dynkin on renewal processes (\cite[Theorem 8.6.3]{bingham1989regular}).
Interestingly, the distribution of the first epoch $\tau_{1}$ does
not scale out as we move to the limit and obtain the process $Y_{t}^{t_{0}}$.
Indeed, one can show (similarly to the proof of Theorem \ref{thm: aging property})
that the distribution of the process $R_{t}$, the time left before
the next regeneration at time $t$, is
\begin{equation}
f_{R_{t}}\left(r\right)=\frac{\left(\frac{r}{t}\right)^{-\alpha}\left(1+\frac{r}{t}\right)^{-1}}{t\cdot B\left[\alpha,1-\alpha\right]}\qquad r>0.\label{eq:the pdf of R_t}
\end{equation}
 Since it was noted in \cite{Meerschaert} that the process $Y_{t}$
starts afresh at time $H_{t}=D_{E_{t}}=t$ depending only on the position
of $Y_{t}$, and by the fact that in our case the process $Y_{t}$
is homogeneous in space, it follows that once the process $Y_{t}^{t_{0}}$
leaves the state $0$ it behaves like the process $Y_{t}$ from that
point on. Now, condition the probability $P\left(Y_{t_{1}}^{t_{0}}\in B_{1},Y_{t_{2}}^{t_{0}}\in B_{2},...,Y_{t_{k}}^{t_{0}}\in B_{k}\right)$
on the event $\left\{ R_{t_{0}}=r\right\} $and integrate with respect
to $r$ to obtain (\ref{eq:Aging result}). It should be clear now
why $0\notin B_{1}$ as we would like to make sure that the system
is mobilized before time $t_{1}$.
\end{rem}

\begin{rem}
Let $X_{t}$ be a renewal process with interarrival times $\{W_{i}\}$
whose tail distribution $1-F\left(x\right)\in R\left(-\alpha\right)$
for $0<\alpha<1$, namely, there exists a slowly varying function
$L\left(x\right)$ such that $1-F\left(x\right)\sim x^{-\alpha}L\left(x\right)$
when $x\rightarrow\infty$. Define the arrival times $T_{n}=\sum_{i=1}^{n}W_{i}$
and let $S_{t}=t-T_{X_{t}}$ be the \emph{age process, }the time spent
at the current state. It was shown in \cite[Theorem 8.6.3]{bingham1989regular}
that the distribution of $\frac{S_{t}}{t}$ converges, as $t\rightarrow\infty$.
The limit is the so called Generalized Beta of the first kind distribution
$GB1\left(1-\alpha,\alpha,1\right)$ whose pdf equals $f_{V_{1}}$,
where 
\begin{equation}
f_{V_{t}}\left(v\right)=\frac{\left(\frac{v}{t}\right)^{-\alpha}\left(1-\frac{v}{t}\right)^{\alpha-1}}{tB\left[\alpha,1-\alpha\right]}\qquad0<v<t.\label{eq: the pdf of V_t}
\end{equation}
 In \cite{Meerschaert}, the analogous process $V_{t}=t-D_{E_{t}-}$
was defined to track the time that has passed since the last regeneration
of the process $Y_{t}$. It can be easily shown, along similar lines
to the proof of Theorem \ref{thm: aging property}, that the process
$V_{t-}$ has distribution $GB1\left(1-\alpha,\alpha,t\right)$. Equations
\ref{eq:the pdf of R_t} and \ref{eq: the pdf of V_t} explain the
results of Jurlewicz et al in \cite{Jurlewicz2012}. There it was
proven (\cite[Eq. 5.12]{Jurlewicz2012}) that $D_{E_{t}}$ has pdf
\begin{equation}
g\left(r\right)=\frac{r^{-1}}{B\left[\alpha,1-\alpha\right]}\left(\frac{t}{r-t}\right)^{\alpha}\qquad r>t,\label{eq: Jurlewicz eq 5.9}
\end{equation}

and that $D_{E_{t}-}$ has pdf (\cite[Eq. 5.9]{Jurlewicz2012}) 
\begin{equation}
h\left(v\right)=\frac{v^{\alpha-1}\left(t-v\right)^{-\alpha}}{B\left[\alpha,1-\alpha\right]}\qquad0<v<t.\label{eq:Jurlewicz eq 5.12}
\end{equation}
Equation \ref{eq: Jurlewicz eq 5.9} and \ref{eq:Jurlewicz eq 5.12}
can be obtained by \ref{eq:the pdf of R_t} and \ref{eq: the pdf of V_t}
respectively, by translation and reflection.
\end{rem}

\section{Aging self similarity\label{sec:Aging-self-similarity}}

Recall that a process $X_{t}$ is called self-similar if for every
$a>0$ there exists $b>0$ such that the finite dimensional distributions
of the time scaled process $X_{at}$ equals that of the process $bX_{t}$.
It is well known (\cite[Section 13]{sato1999levy}) that if $X_{t}$
is a L�vy process then it is self-similar if and only if $X_{t}$
is strictly stable, i.e for every $a>0$ there exist $b>0$ such that
$E\left(e^{iuX_{1}}\right)^{a}=E\left(e^{iubX_{1}}\right)$. For self-similar
non trivial processes that are stochastically continuous at $t=0$,
$b=a^{H}$ (\cite[Theorem 1.1.1]{embrechts2009selfsimilar}), where
$H>0$ if and only if $X_{t}=0$ with probability one. $H$ is sometimes
called the Hurst parameter. For example, for fractional Brownian motion
$0<H<1$ while the Hurst parameter of the stable subordinator of index
$0<\alpha\leq2$ is $\nicefrac{1}{\alpha}$ . For self-similar processes
with stationary increments and finite second moment the Hurst parameter
(when it exists) determines long range dependence (\cite[Section 3.2]{embrechts2009selfsimilar}).
Throughout this section we consider the process $Y_{t}^{t_{0}}=A_{E_{t+t_{0}}}-A_{E_{t_{0}}}$
where $A_{t}$ is a strictly stable process whose Hurst parameter
we denote by $\nicefrac{1}{\beta}$ and $E_{t}$ is the inverse of
a stable subordinator of index $\alpha.$ We wish to find whether
$Y_{t}^{t_{0}}$ has the property of self-similarity or a different
property that resembles self-similarity to some extent. From Theorem
\ref{thm: aging property} it is only reasonable that any self-similarity-like
property of $Y_{t}^{t_{0}}$ should be strongly connected to the self-similarity
of the process $Y_{t}$.

The next corollary states that although the aging process $Y_{t}^{t_{0}}$
is not self-similar it exhibits a self-similar-like behavior. Intuitively
it suggests that $Y_{at}^{t_{0}}$ behaves like a ``younger''($a>1)$
scaled version of itself.
\begin{cor}
Let $Y_{t}^{t_{0}}$be an aging process and let $B_{i}$ for $1\leq i\leq k$
be Borel sets in $\mathbb{R}$. Then
\[
\left(Y_{at_{1}}^{t_{0}},Y_{at_{2}}^{t_{0}},...,Y_{at_{k}}^{t_{0}}\right)\overset{d}{=}\left(a^{\frac{\alpha}{\beta}}Y_{t_{1}}^{\frac{t_{0}}{a}},a^{\frac{\alpha}{\beta}}Y_{t_{2}}^{\frac{t_{0}}{a}},...,a^{\frac{\alpha}{\beta}}Y_{t_{k}}^{\frac{t_{0}}{a}}\right).
\]
\end{cor}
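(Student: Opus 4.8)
The plan is to reduce the self-similarity-like identity to two ingredients already in hand: the convolution formula of Theorem~\ref{thm: aging property}, and the ordinary self-similarity of the CTRWL $Y_{t}$. First I would recall that since $A_{t}$ is strictly stable with Hurst parameter $1/\beta$, it satisfies $A_{at}\overset{d}{=}a^{1/\beta}A_{t}$ at the level of finite dimensional distributions, and since $D_{t}$ is a stable subordinator of index $\alpha$ its inverse scales as $E_{at}\overset{d}{=}a^{\alpha}E_{t}$; combining these (and using independence of $A$ and $D$), the time-changed process obeys $(Y_{at_{1}},\dots,Y_{at_{k}})\overset{d}{=}(a^{\alpha/\beta}Y_{t_{1}},\dots,a^{\alpha/\beta}Y_{t_{k}})$. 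This is the statement that $Y_{t}$ is self-similar with Hurst exponent $\alpha/\beta$; I would either cite the relevant fact about inverse stable subordinators or give the one-line characteristic-function verification.

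Next I would apply Theorem~\ref{thm: aging property} at time parameters $at_{1}<at_{2}<\dots<at_{k}$ to write, for Borel sets with $0\notin B_{1}$,
\begin{equation}
P\left(Y_{at_{1}}^{t_{0}}\in B_{1},\dots,Y_{at_{k}}^{t_{0}}\in B_{k}\right)=\integral 0{at_{1}}P\left(Y_{at_{1}-r}\in B_{1},\dots,Y_{at_{k}-r}\in B_{k}\right)p_{t_{0}}\left(r\right)dr.
\end{equation}
Then I would substitute $r=as$, so that $dr=a\,ds$ and the interval $[0,at_{1}]$ becomes $[0,t_{1}]$, and observe that $at_{i}-r=a(t_{i}-s)$. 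Using the self-similarity of $Y$ from the first paragraph, $P(Y_{a(t_{1}-s)}\in B_{1},\dots)=P(a^{\alpha/\beta}Y_{t_{1}-s}\in B_{1},\dots)=P(Y_{t_{1}-s}\in a^{-\alpha/\beta}B_{1},\dots)$. It remains to check the scaling of the density $p_{t_{0}}$: from the explicit form $p_{t_{0}}(r)=f(r,1-\alpha,\alpha,t_{0})$ in~(\ref{eq:generalized beta prime distribution}) one sees directly that $a\,p_{t_{0}}(as)=p_{t_{0}/a}(s)$, i.e.\ the $GB2$ scale parameter $h$ scales linearly. Plugging this in turns the integral into $\integral 0{t_{1}}P(Y_{t_{1}-s}\in a^{-\alpha/\beta}B_{1},\dots)p_{t_{0}/a}(s)\,ds$, which by Theorem~\ref{thm: aging property} applied with parameters $t_{1},\dots,t_{k}$ and starting age $t_{0}/a$ equals $P(Y_{t_{1}}^{t_{0}/a}\in a^{-\alpha/\beta}B_{1},\dots)=P(a^{\alpha/\beta}Y_{t_{1}}^{t_{0}/a}\in B_{1},\dots)$, which is the claimed identity on the class of sets with $0\notin B_{1}$.

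Finally I would remark that Remark~\ref{remark:zero increment probability} shows the finite dimensional distributions on sets with $0\notin B_{1}$ determine all finite dimensional distributions, so the equality extends to arbitrary Borel sets $B_{1}$; alternatively one notes that both sides assign the same mass to $\{Y_{t_{1}}^{\bullet}=0\}$ because $P(Y_{at}^{t_{0}}=0)$ depends only on the ratio structure via~(\ref{eq:one dimensional zero probability}) and the same change of variables $r=as$. I expect the only mildly delicate point to be bookkeeping: making sure the change of variables $r=as$, the scaling $a\,p_{t_{0}}(as)=p_{t_{0}/a}(s)$, and the replacement of $B_{i}$ by $a^{-\alpha/\beta}B_{i}$ are applied consistently and that the hypothesis $0\notin B_{1}$ (hence $0\notin a^{-\alpha/\beta}B_{1}$ for $a>0$) is preserved so that Theorem~\ref{thm: aging property} can legitimately be invoked in both directions; everything else is a routine substitution.
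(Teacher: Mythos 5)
Your proposal is correct and follows essentially the same route as the paper's own proof: apply Theorem \ref{thm: aging property} at times $at_{i}$, substitute $r=as$, use the scaling $a\,p_{t_{0}}(as)=p_{t_{0}/a}(s)$ of the generalized beta prime density together with the self-similarity of $Y_{t}$ with Hurst exponent $\alpha/\beta$, and then extend from Borel sets with $0\notin B_{1}$ to arbitrary Borel sets via Remark \ref{remark:zero increment probability}. The only cosmetic difference is that you move the factor $a^{\alpha/\beta}$ onto the sets as $a^{-\alpha/\beta}B_{i}$ before re-invoking the theorem, whereas the paper keeps it inside the probability; these are equivalent.
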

\begin{proof}
For simplicity we only prove the result for $k=2$ as the proof for
$k>2$ is similar. First assume that $B_{1}\subseteq\mathbb{R}$ does
not contain zero . By Theorem \ref{thm: aging property} we have
\[
P\left(Y_{at_{1}}^{t_{0}}\in B_{1},Y_{at_{2}}^{t_{0}}\in B_{2}\right)=\integral 0{at_{1}}P\left(Y_{at_{1}-r}\in B_{1},Y_{at_{2}-r}\in B_{2}\right)p_{t_{0}}\left(r\right)dr.
\]
Apply the change of variables $r'=\frac{r}{a}$ to obtain
\begin{alignat*}{1}
P\left(Y_{at_{1}}^{t_{0}}\in B_{1},Y_{at_{2}}^{t_{0}}\in B_{2}\right) & =\integral 0{t_{1}}P\left(Y_{a\left(t_{1}-r'\right)}\in B_{1},Y_{a\left(t_{2}-r'\right)}\in B_{2}\right)\frac{\left(\frac{r'a}{t_{0}}\right)^{-\alpha}\left(1+\frac{r'a}{t_{0}}\right)^{-1}}{t_{0}\cdot B\left[\alpha,1-\alpha\right]}adr'.
\end{alignat*}
By \cite[Corollary 4.1]{Meerschaert2004} $Y_{t}$ is self similar
with Hurst parameter $\frac{\alpha}{\beta}$. Therefore we have 
\begin{alignat}{1}
P\left(Y_{at_{1}}^{t_{0}}\in B_{1},Y_{at_{2}}^{t_{0}}\in B_{2}\right) & =\integral 0{t_{1}}P\left(a^{\frac{\alpha}{\beta}}Y_{\left(t_{1}-r'\right)}\in B_{1},a^{\frac{\alpha}{\beta}}Y_{\left(t_{2}-r'\right)}\in B_{2}\right)p_{\frac{t_{0}}{a}}\left(r\right)dr\label{eq:aging self similarity two dimensional}\\
 & =P\left(a^{\frac{\alpha}{\beta}}Y_{t_{1}}^{\frac{t_{0}}{a}}\in B_{1},a^{\frac{\alpha}{\beta}}Y_{t_{2}}^{\frac{t_{0}}{a}}\in B_{2}\right).\nonumber 
\end{alignat}
Now, by Remark \ref{remark:zero increment probability} it follows
that (\ref{eq:aging self similarity two dimensional}) holds for any
Borel sets $B_{1},B_{2}\subseteq\mathbb{R}$ and the result follows. 
\end{proof}

\section{Asymptotic behavior and the Fractional Fokker-Planck equation\label{sec:Asymptotic-behavior-and}}

An easy yet important consequence of Theorem \ref{thm: aging property}
is the following.
\begin{cor}
\textup{\label{Corollary asymptotic}Let $B\subseteq\mathbb{R}$ be
a Borel measurable subset such that $0\notin B$ and $P\left(Y_{t}^{t_{0}}\in B\right)\neq0$,
then 
\begin{equation}
P\left(Y_{t}^{t_{0}}\in B\right)\sim Ct_{0}^{\alpha-1}\qquad t_{0}\rightarrow\infty\label{eq:asymptotics of increments}
\end{equation}
where $C=\frac{\sin\left(\pi\alpha\right)}{\pi}\integral 0tP\left(Y_{t-r}\in B\right)r^{-\alpha}dr$.}\end{cor}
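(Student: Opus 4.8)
The plan is to start from the one-dimensional case of Theorem~\ref{thm: aging property}, namely
\[
P\left(Y_{t}^{t_{0}}\in B\right)=\integral 0{t}P\left(Y_{t-r}\in B\right)p_{t_{0}}\left(r\right)dr,
\]
which holds since $0\notin B$, and to extract the leading-order behavior of the kernel $p_{t_{0}}\left(r\right)=f\left(r,1-\alpha,\alpha,t_{0}\right)$ as $t_{0}\rightarrow\infty$ while $r$ stays in the fixed compact interval $[0,t]$. From the formula (\ref{eq:generalized beta prime distribution}) with $\mu=1-\alpha$, $\nu=\alpha$, $h=t_{0}$ we have
\[
p_{t_{0}}\left(r\right)=\frac{\left(\frac{r}{t_{0}}\right)^{-\alpha}\left(1+\frac{r}{t_{0}}\right)^{-1}}{t_{0}\cdot B\left[1-\alpha,\alpha\right]}
=\frac{r^{-\alpha}\,t_{0}^{\alpha-1}}{B\left[1-\alpha,\alpha\right]}\left(1+\frac{r}{t_{0}}\right)^{-1},
\]
so that for each fixed $r\in(0,t]$, $t_{0}^{1-\alpha}p_{t_{0}}\left(r\right)\to r^{-\alpha}/B\left[1-\alpha,\alpha\right]$ as $t_{0}\to\infty$. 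Since $B\left[1-\alpha,\alpha\right]=\Gamma(1-\alpha)\Gamma(\alpha)=\pi/\sin(\pi\alpha)$ by Euler's reflection formula, the pointwise limit is $\frac{\sin(\pi\alpha)}{\pi}r^{-\alpha}$, which is exactly the density appearing in the constant $C$.

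The next step is to justify passing this limit inside the integral $\integral 0{t}P\left(Y_{t-r}\in B\right)p_{t_{0}}\left(r\right)dr$. The clean way is dominated convergence applied to the family $t_{0}^{1-\alpha}p_{t_{0}}\left(r\right)$: for all $t_{0}\geq 1$ (say) and all $r\in(0,t]$ we have $\left(1+\frac{r}{t_{0}}\right)^{-1}\leq 1$, hence
\[
0\leq t_{0}^{1-\alpha}p_{t_{0}}\left(r\right)\leq \frac{\sin(\pi\alpha)}{\pi}\,r^{-\alpha},
\]
and the dominating function $r\mapsto \frac{\sin(\pi\alpha)}{\pi}\,P\left(Y_{t-r}\in B\right)r^{-\alpha}$ is integrable on $(0,t)$ because $P\left(Y_{t-r}\in B\right)\leq 1$ and $r^{-\alpha}$ is integrable near $0$ for $0<\alpha<1$. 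Therefore
\[
t_{0}^{1-\alpha}P\left(Y_{t}^{t_{0}}\in B\right)
=\integral 0{t}P\left(Y_{t-r}\in B\right)\,t_{0}^{1-\alpha}p_{t_{0}}\left(r\right)\,dr
\xrightarrow[t_{0}\to\infty]{}\frac{\sin(\pi\alpha)}{\pi}\integral 0{t}P\left(Y_{t-r}\in B\right)r^{-\alpha}dr=C,
\]
which is precisely (\ref{eq:asymptotics of increments}). The hypothesis $P\left(Y_{t}^{t_{0}}\in B\right)\neq 0$ is only needed so that the asymptotic equivalence $\sim$ is meaningful (i.e. $C\neq 0$, which follows since $P(Y_{t-r}\in B)$ is not a.e. zero on $(0,t)$ under that hypothesis, combined with continuity considerations); one should remark briefly on this.

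**Main obstacle.** There is no serious analytic obstacle here — the argument is a one-line application of dominated convergence once the kernel is written out — so the only point requiring care is the measurability/regularity of $r\mapsto P\left(Y_{t-r}\in B\right)$ on $(0,t)$, needed both to make sense of the integral and to ensure the limiting constant $C$ is well defined and strictly positive under the stated non-degeneracy hypothesis. This is guaranteed by the fact, established in Section~\ref{sec:Finite-dimensional-distribution} via the transition operators $Q_{t}$, that $s\mapsto P\left(Y_{s}\in B\right)$ is given by the explicit integral expression in (\ref{eq:Expectation for S class}) and is in particular Borel measurable in $s$; and the nontriviality of $C$ follows because if $C=0$ then $P\left(Y_{s}\in B\right)=0$ for a.e.\ $s\in(0,t)$, which by (\ref{eq:one dimensional zero probability}) would force $P\left(Y_{t}^{t_{0}}\in B\right)=0$, contradicting the hypothesis.
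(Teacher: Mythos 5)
Your proof is correct and takes essentially the same route as the paper: both reduce to the one-dimensional case of Theorem \ref{thm: aging property}, write the kernel as $p_{t_{0}}\left(r\right)=\frac{\sin\left(\pi\alpha\right)}{\pi}r^{-\alpha}t_{0}^{\alpha-1}\left(1+\frac{r}{t_{0}}\right)^{-1}$, and apply dominated convergence as $t_{0}\rightarrow\infty$. The only (immaterial) difference is in certifying $C\neq0$: the paper invokes the continuity Lemma \ref{lem: continuity of probability}, while you argue by an almost-everywhere contraposition, which works equally well.
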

\begin{proof}
{} First note that by the continuity of $P\left(Y_{t}\in B\right)$
(see (\ref{lem: continuity of probability})) $C\neq0\Leftrightarrow P\left(Y_{t}^{t_{0}}\in B\right)\neq0$.
By dominated convergence we then have,
\begin{alignat*}{1}
\underset{t_{0}\rightarrow\infty}{\lim}\frac{P\left(Y_{t}^{t_{0}}\in B\right)}{Ct_{0}^{\alpha-1}}=\underset{t_{0}\rightarrow\infty}{\lim}\frac{t_{0}^{\alpha-1}\frac{\sin\left(\pi\alpha\right)}{\pi}\integral 0tP\left(Y_{t-r}\in B\right)r^{-\alpha}\left(1+\frac{r}{t_{0}}\right)^{-1}dr}{t_{0}^{\alpha-1}\frac{\sin\left(\pi\alpha\right)}{\pi}\integral 0tP\left(Y_{t-r}\in B\right)r^{-\alpha}dr} & =1.
\end{alignat*}
\end{proof}
\begin{rem}
When the process $Y_{t}$ is a renewal process (that is the case for
the FPP) the convergence of $P\left(Y_{t}^{t_{0}}\in B\right)$ to
zero is expected by the Renewal Theorem (\cite[XI.1]{feller1971introduction})
and the fact that the interarrival times have the Mittag-Leffer distribution
with infinite expectation. Interestingly, it was shown by Erickson
in \cite[Theorem 1]{Erickson1970}, that if $Y_{t}$ is a renewal
process with interarrival times $W_{n}$ with $F\left(t\right)=P\left(W_{1}\leq t\right)$
such that $1-F\left(t\right)\in R\left(-\alpha\right)$ for $0<\alpha<1$,
i.e $1-F\left(t\right)\sim t^{-\alpha}L\left(t\right)$ as $t\rightarrow\infty$
where $L\left(t\right)$ is a slowly varying function and $F$ is
not arithmetic, then
\begin{equation}
E\left(Y_{t}^{t_{0}}\right)\sim\frac{\sin\left(\pi\alpha\right)}{\pi}\frac{t}{L\left(t_{0}\right)}t_{0}^{\alpha-1}\qquad t_{0}\rightarrow\infty.\label{eq:Erickson}
\end{equation}
We now show how (\ref{eq:Erickson}) can be obtained for the FPP by
Corollary \ref{Corollary asymptotic}. First note that by similar
arguments as in Corollary \ref{Corollary asymptotic} we have
\begin{equation}
E\left(Y_{t}^{t_{0}}\right)\sim t_{0}^{\alpha-1}\frac{\sin\left(\pi\alpha\right)}{\pi}\integral 0tE\left(Y_{t-r}\right)r^{-\alpha}dr\qquad t_{0}\rightarrow\infty.\label{eq:Expectation asymptotics}
\end{equation}
 Let $Y_{t}=N_{t}^{\alpha}$ be the fractional Poisson process with
intensity $\lambda=1.$ By \cite[Eq. 2.7]{Beghin2009}, $E\left(Y_{t-r}\right)=\frac{\left(t-r\right)^{\alpha}}{\Gamma\left(1+\alpha\right)}$
and so by (\ref{eq:Expectation asymptotics}) we have 
\begin{alignat*}{1}
E\left(Y_{t}^{t_{0}}\right) & \sim t_{0}^{\alpha-1}\frac{\sin\left(\pi\alpha\right)}{\pi}\integral 0t\frac{\left(t-r\right)^{\alpha}}{\Gamma\left(1+\alpha\right)}r^{-\alpha}dr\\
 & =t_{0}^{\alpha-1}\frac{\sin\left(\pi\alpha\right)}{\pi}t\Gamma\left(1-\alpha\right).
\end{alignat*}
To see this, note that $\integral 0t\left(t-r\right)^{\alpha}r^{-\alpha}dr=\frac{\Gamma\left(1-\alpha\right)}{\left(\alpha+1\right)}\partial_{t}^{\alpha}\left[t^{\alpha+1}1_{t\geq0}\right]=\Gamma\left(1-\alpha\right)\Gamma\left(\alpha+1\right)t$,
where $\partial_{t}^{\alpha}$ is the Caputo derivative of index $\alpha$
(\ref{eq:Caputo derivative}). This agrees with (\ref{eq:Erickson}).
Indeed, note that by \cite{Repin2000} the asymptotic behavior of
the Mittag-Leffler distribution pdf is $f^{\alpha}\left(t\right)\sim\frac{t^{-1-\alpha}\alpha}{\Gamma\left(1-\alpha\right)}$
as $t\rightarrow\infty$ (note that there is a typo there as $\alpha$
should be in the numerator) and by the Karamata Tauberian Theorem
(\cite[Theorem 1.5.11]{bingham1989regular}) we see that $E^{\alpha}\left(-t^{\alpha}\right)=\integral t{\infty}f\left(y\right)dy\sim\frac{t^{-\alpha}}{\Gamma\left(1-\alpha\right)}$
as $t\rightarrow\infty$ so $\left(L\left(t\right)\right)^{-1}=\Gamma\left(1-\alpha\right)$.
\end{rem}
While it is known that generally CTRWL lose their stationarity property
for $0<\alpha<1$ (\cite[Corollary 4.3]{Meerschaert2004}), Theorem
\ref{thm: aging property} suggests a way of measuring the stationarity
of a process in the class $\mathcal{S}$. The FPP for example has
no stationary increments for $0<\alpha<1$, however, for $\alpha=1$
we obtain the Poisson process which is of course stationary as being
a Levy process. We proceed with a useful lemma that states that the
distribution of the processes in $\mathcal{S}$ is continuous as a
function of time. 
\begin{lem}
\label{lem: continuity of probability}Let $Y_{t}\in\mathcal{S}$
and $C\subset\mbox{\ensuremath{\mathbb{R}}}$ a Borel set, then the
function $t\mapsto P\left(Y_{t}\in C\right)$ is continuous on $\left(0,\infty\right)$.\end{lem}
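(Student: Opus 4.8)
The plan is to reduce the statement to a total-variation continuity property of the law of $E_{t}$ and then invoke Scheffé's lemma. Since the Lévy process $A$ and the stable subordinator $D$ are independent and $E_{t}=\inf\{s:D_{s}>t\}$ is a measurable functional of the path of $D$, the processes $\left(A_{u}\right)_{u\geq0}$ and $\left(E_{t}\right)_{t\geq0}$ are independent. Conditioning on $E_{t}$ therefore gives
\[
P\left(Y_{t}\in C\right)=P\left(A_{E_{t}}\in C\right)=\integral 0{\infty}P\left(A_{u}\in C\right)\,P\left(E_{t}\in du\right)=\integral 0{\infty}\psi(u)\,f_{E_{t}}(u)\,du,
\]
where $\psi(u):=P\left(A_{u}\in C\right)$ is a bounded measurable function taking values in $[0,1]$, and $f_{E_{t}}$ will be shown to be the density of $E_{t}$. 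The point of isolating $\psi$ is that it is only measurable, not continuous, so weak convergence of $E_{t_{n}}$ to $E_{t}$ would not suffice; what is needed is convergence of the laws \emph{in total variation}, i.e.\ $\|f_{E_{t_{n}}}-f_{E_{t}}\|_{L^{1}(0,\infty)}\to0$ as $t_{n}\to t$. This is the crux of the argument.

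Next I would produce an explicit formula for $f_{E_{t}}$. From $E_{t}=\inf\{s:D_{s}>t\}$ together with the monotonicity and right-continuity of $D$ one gets $\{E_{t}\leq u\}=\{D_{u}\geq t\}$ up to a null set, so, since $D_{u}$ has a continuous density \cite[Section 2.4]{Zolotarev86}, the distribution function of $E_{t}$ in the variable $u$ is $u\mapsto 1-P\left(D_{u}\leq t\right)$. Using the self-similarity of the stable subordinator, $D_{u}\overset{d}{=}(cu)^{1/\alpha}D_{1}^{*}$, where $D_{1}^{*}$ is the standard one-sided stable subordinator with continuous, positive density $g_{1}$ on $(0,\infty)$, we obtain $P\left(D_{u}\leq t\right)=\overline{G}_{1}\!\left(t(cu)^{-1/\alpha}\right)$ with $\overline{G}_{1}$ the c.d.f.\ of $D_{1}^{*}$, and differentiating in $u$,
\[
f_{E_{t}}(u)=\frac{t\,c^{-1/\alpha}}{\alpha}\,u^{-1-1/\alpha}\,g_{1}\!\left(t\,c^{-1/\alpha}\,u^{-1/\alpha}\right),\qquad u>0.
\]
The features I need are: $f_{E_{t}}$ is a genuine probability density (as $0<E_{t}<\infty$ a.s.\ for $t>0$), and $(t,u)\mapsto f_{E_{t}}(u)$ is jointly continuous on $(0,\infty)^{2}$, because $g_{1}$ is continuous on $(0,\infty)$.

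Finally, fix $t>0$ and let $t_{n}\to t$. By the joint continuity just noted, $f_{E_{t_{n}}}(u)\to f_{E_{t}}(u)$ for every $u>0$, and each $f_{E_{t_{n}}}$ integrates to $1$; Scheffé's lemma then gives $\|f_{E_{t_{n}}}-f_{E_{t}}\|_{L^{1}(0,\infty)}\to0$. Consequently
\[
\left|P\left(Y_{t_{n}}\in C\right)-P\left(Y_{t}\in C\right)\right|\leq\integral 0{\infty}\psi(u)\,\bigl|f_{E_{t_{n}}}(u)-f_{E_{t}}(u)\bigr|\,du\leq\|f_{E_{t_{n}}}-f_{E_{t}}\|_{L^{1}(0,\infty)}\longrightarrow0,
\]
which proves continuity of $t\mapsto P\left(Y_{t}\in C\right)$ on $(0,\infty)$. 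I expect the only genuinely delicate point to be the justification that $E_{t}$ has the density displayed above and that it varies continuously with $t$ (namely the continuity of the one-sided stable density $g_{1}$ and the identity $\{E_{t}\leq u\}=\{D_{u}\geq t\}$); everything after that is Scheffé's lemma plus the trivial bound $|\psi|\leq1$. Should one prefer to avoid the closed form, it suffices to observe that $P\left(D_{u}\leq t\right)$ is $C^{1}$ in $u$ with derivative jointly continuous in $(t,u)$ and then run the same Scheffé argument; the scaling identity merely makes the formula explicit.
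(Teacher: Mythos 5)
Your proof is correct and follows essentially the same route as the paper: condition on $E_{t}$ to write $P\left(Y_{t}\in C\right)=\integral 0{\infty}P\left(A_{u}\in C\right)f_{E_{t}}\left(u\right)du$, bound the increment by the $L^{1}$ distance of the densities of $E_{t}$, and conclude via pointwise continuity of that density (the paper cites the same formula $h\left(x,t\right)=\frac{t}{\alpha}x^{-1-\frac{1}{\alpha}}g\left(tx^{-\frac{1}{\alpha}}\right)$ from Meerschaert--Scheffler that you rederive) together with Scheff\'e's lemma (the paper invokes the equivalent Royden result). The only cosmetic difference is that you derive the inverse-subordinator density from the identity $\{E_{t}\leq u\}=\{D_{u}\geq t\}$ rather than citing it.
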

\begin{proof}
Since $Y_{t}=A_{E_{t}}$, by a simple conditioning argument (\cite[Eq. (2.7)]{Meerschaert2013})
we have
\[
P\left(Y_{t}^ {}\in C\right)=\integral 0{\infty}P\left(A_{y}\in C\right)h\left(y,t\right)dy,
\]
where $h\left(x,t\right)$ is the pdf of the process $E_{t}.$ Then
\begin{alignat*}{1}
\underset{h\rightarrow0}{\lim\sup}\left|P\left(Y_{t+h}\in C\right)-P\left(Y_{t}\in C\right)\right| & =\underset{h\rightarrow0}{\lim\sup}\left|\integral 0{\infty}P\left(A_{y}\in C\right)h\left(y,t+h\right)dy-\integral 0{\infty}P\left(A_{y}\in C\right)h\left(y,t\right)dy\right|\\
 & \leq\underset{h\rightarrow0}{\lim\sup}\integral 0{\infty}\left|h\left(y,t+h\right)-h\left(y,t\right)\right|dy.
\end{alignat*}
It was proved in (\cite[Corollary 3.1]{Meerschaert2004}) that 
\[
h\left(x,t\right)=\frac{t}{\alpha}x^{-1-\frac{1}{\alpha}}g\left(tx^{-\frac{1}{\alpha}}\right)
\]
where $g\left(x\right)$ is the pdf of a stable r.v. Since $g\left(x\right)$
is smooth it follows that $h\left(x,t\right)$ is continuous on $t,x>0$
. Trivially we have 
\[
\underset{h\rightarrow0}{\lim}\integral 0{\infty}h\left(y,t+h\right)dy=\integral 0{\infty}h\left(y,t\right)dy=1.
\]
Hence, a basic result in analysis \cite[Chapter 7, Theorem 7]{royden2010real}
implies that
\[
\underset{h\rightarrow0}{\lim}\integral 0{\infty}\left|h\left(y,t+h\right)-h\left(y,t\right)\right|dy=0,
\]
and the result follows. 
\end{proof}
The next result states that as $\alpha\rightarrow1$ the process $Y_{t}$
, in some sense, becomes more stationary. 
\begin{prop}
Let $Y_{t}\in\mathcal{S}$, then for every $t,t_{0}>0$ 
\[
Y_{t}^{t_{0}}=Y_{t+t_{0}}-Y_{t_{0}}\overset{d}{\rightarrow}Y_{t}\qquad\alpha\rightarrow1.
\]
\end{prop}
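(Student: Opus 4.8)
The plan is to prove the stronger statement that $P(Y_{t}^{t_{0}}\in B)\to P(Y_{t}\in B)$ for \emph{every} Borel set $B\subseteq\mathbb{R}$ as $\alpha\uparrow1$; setwise convergence of the laws immediately yields $Y_{t}^{t_{0}}\overset{d}{\rightarrow}Y_{t}$. The only inputs are Theorem \ref{thm: aging property} in one dimension (together with the expression for $P(Y_{t}^{t_{0}}=0)$ from Remark \ref{remark:zero increment probability}) and the continuity Lemma \ref{lem: continuity of probability}. The driving observation is that, for each fixed $t_{0}>0$, the generalized beta prime density $p_{t_{0}}(r)=f(r,1-\alpha,\alpha,t_{0})$ concentrates at the origin as $\alpha\uparrow1$: substituting $u=r/t_{0}$ and using $(1+u)^{-1}\leq u^{-1}$ gives, for any $\delta>0$, $\integral{\delta}{\infty}p_{t_{0}}(r)\,dr\leq\frac{(\delta/t_{0})^{-\alpha}}{\alpha\,B[\alpha,1-\alpha]}$, and since $B[\alpha,1-\alpha]=\Gamma(\alpha)\Gamma(1-\alpha)=\pi/\sin(\pi\alpha)\to\infty$ as $\alpha\uparrow1$ while the numerator stays bounded, the right-hand side tends to $0$; equivalently $\integral{0}{\delta}p_{t_{0}}(r)\,dr\to1$. (Probabilistically, the remaining-lifetime variable $R_{t_{0}}$ collapses to $0$, which is precisely the sense in which the process regenerates instantaneously when $\alpha=1$.)

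First I would treat a Borel set $B$ with $0\notin B$. By Theorem \ref{thm: aging property}, $P(Y_{t}^{t_{0}}\in B)=\integral{0}{t}P(Y_{t-r}\in B)\,p_{t_{0}}(r)\,dr$. Fix $\varepsilon>0$ and use Lemma \ref{lem: continuity of probability} (continuity of $s\mapsto P(Y_{s}\in B)$ at $s=t$) to pick $\delta\in(0,t)$ with $|P(Y_{t-r}\in B)-P(Y_{t}\in B)|<\varepsilon$ whenever $0<r<\delta$. Splitting the integral at $\delta$, bounding $P(Y_{t-r}\in B)\leq1$ on $[\delta,t]$, and using $0\leq\integral{0}{\delta}p_{t_{0}}\leq1$, one obtains $|P(Y_{t}^{t_{0}}\in B)-P(Y_{t}\in B)|\leq\varepsilon+2\integral{\delta}{\infty}p_{t_{0}}$ for every $\alpha$; letting $\alpha\uparrow1$ and invoking the concentration estimate yields $\limsup_{\alpha\uparrow1}|P(Y_{t}^{t_{0}}\in B)-P(Y_{t}\in B)|\leq\varepsilon$, and since $\varepsilon$ is arbitrary, $P(Y_{t}^{t_{0}}\in B)\to P(Y_{t}\in B)$.

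Finally I would pass to arbitrary Borel sets by splitting off the atom at the origin. For $0\in B$, write $P(Y_{t}^{t_{0}}\in B)=P(Y_{t}^{t_{0}}=0)+P(Y_{t}^{t_{0}}\in B\setminus\{0\})$; the second term converges to $P(Y_{t}\in B\setminus\{0\})$ by the previous step. For the first term, Remark \ref{remark:zero increment probability} gives $P(Y_{t}^{t_{0}}=0)=\integral{t}{\infty}p_{t_{0}}(r)\,dr+\integral{0}{t}P(Y_{t-r}=0)\,p_{t_{0}}(r)\,dr$: the first summand vanishes in the limit by the concentration estimate with $\delta=t>0$, and the second converges to $P(Y_{t}=0)$ by the same splitting argument applied to $s\mapsto P(Y_{s}=0)$, which is continuous by Lemma \ref{lem: continuity of probability} with $C=\{0\}$. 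Hence $P(Y_{t}^{t_{0}}=0)\to P(Y_{t}=0)$, and therefore $P(Y_{t}^{t_{0}}\in B)\to P(Y_{t}\in B)$ for every Borel set $B$, which gives the claim. I expect the only delicate point to be the bookkeeping at the origin: one must use the explicit formula for $P(Y_{t}^{t_{0}}=0)$ together with the divergence of $B[\alpha,1-\alpha]$ to check that the atom of $Y_{t}^{t_{0}}$ moves correctly onto that of $Y_{t}$; everything else is a routine limit--integral interchange justified by the continuity lemma.
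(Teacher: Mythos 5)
Your proposal is correct, and it reaches the conclusion by a genuinely different route from the paper. The paper computes the Laplace transform of the generalized beta prime density via the confluent hypergeometric function, $\hat{p}_{t_{0}}\left(s\right)=e^{st_{0}}\Gamma\left(\alpha,st_{0}\right)/\Gamma\left(\alpha\right)$, shows by dominated convergence that $\hat{p}_{t_{0}}\left(s\right)\rightarrow1$ as $\alpha\rightarrow1$, and invokes the continuity theorem to get $p_{t_{0}}\overset{w}{\rightarrow}\delta_{0}$; it then pairs this weak convergence with the bounded continuous function $r\mapsto P\left(Y_{t-r}\in B\right)$ (extended by $0$ past $r=t$, continuous there because $0\notin B$). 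You instead establish the concentration of $p_{t_{0}}$ at the origin by a direct, elementary tail estimate, $\int_{\delta}^{\infty}p_{t_{0}}\left(r\right)dr\leq\left(\delta/t_{0}\right)^{-\alpha}/\left(\alpha B\left[\alpha,1-\alpha\right]\right)\rightarrow0$ since $B\left[\alpha,1-\alpha\right]=\pi/\sin\left(\pi\alpha\right)\rightarrow\infty$, and then run an explicit $\varepsilon$--$\delta$ splitting that only requires continuity of $s\mapsto P\left(Y_{s}\in B\right)$ at $s=t$ (Lemma \ref{lem: continuity of probability}), not on all of $[0,t]$. Both arguments are sound; yours avoids the special-function machinery entirely and is more self-contained, and it also makes explicit the bookkeeping at the atom via the formula in Remark \ref{remark:zero increment probability}, yielding the stronger setwise convergence $P\left(Y_{t}^{t_{0}}\in B\right)\rightarrow P\left(Y_{t}\in B\right)$ for every Borel $B$ (the paper gets the atom by complementation, which amounts to the same thing). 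What the paper's Laplace-transform computation buys, beyond the proposition itself, is reused elsewhere: the identity (\ref{eq:LT of the GB}) feeds directly into the asymptotics (\ref{eq:Asymtotics of LT of GB}) and into the derivation of the FFPE in Theorem \ref{thm:Fokker Planck Equation}, so the paper's route is not wasted effort even though it is heavier for this particular statement.
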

\begin{proof}
In \cite[eq. 3.1.19]{slater1960confluent} it was shown that 
\begin{equation}
U\left(a,b,s\right)=\frac{1}{\Gamma\left(a\right)}\integral 0{\infty}e^{-sx}x^{a-1}\left(1+x\right)^{b-a-1}dx,\label{eq:U integral form}
\end{equation}
where $U\left(a,b,s\right)$ is a hypergeometric function that solves
the confluent hypergeometric equation, also known as Kummer's equation
\begin{equation}
s\frac{\partial^{2}U}{\partial^{2}s}+\left(b-s\right)\frac{\partial U}{\partial s}-aU=0.\label{eq:Kummer's equation}
\end{equation}
By (\ref{eq:U integral form}) and a simple change of variables we
find that the Laplace transform of the generalized Beta prime distribution
is given by 
\begin{equation}
\hat{p}_{t_{0}}\left(s\right)=\frac{U\left(1-\alpha,1-\alpha,st_{0}\right)}{\Gamma\left(\alpha\right)}\qquad s>0.\label{eq:LT of the GB}
\end{equation}
Using the identity $U\left(1-\alpha,1-\alpha,x\right)=e^{x}\Gamma\left(\alpha,x\right)$
where $\Gamma\left(\alpha,x\right)$ is the incomplete gamma function
defined by $\Gamma\left(\alpha,x\right)=\integral x{\infty}t^{\alpha-1}e^{-t}dt$
, we can write (\ref{eq:LT of the GB}) in a more familiar notation
\[
\hat{p}_{t_{0}}\left(s\right)=\frac{e^{st_{0}}\Gamma\left(\alpha,st_{0}\right)}{\Gamma\left(\alpha\right)}.
\]
Now, by dominated convergence 
\begin{alignat*}{1}
\underset{\alpha\rightarrow1}{\lim}\hat{p}_{t_{0}}\left(s\right) & =\underset{\alpha\rightarrow1}{\lim}\frac{e^{st_{0}}\integral{st_{0}}{\infty}r^{\alpha-1}e^{-r}dr}{\Gamma\left(\alpha\right)}\\
 & =\frac{e^{st_{0}}e^{-st_{0}}}{1}=1.
\end{alignat*}
Therefore, by \cite[Theorem 4.3]{kallenberg2002foundations} we have
$p_{t_{0}}\overset{w}{\rightarrow}\delta$ as $\alpha\rightarrow1$
where $\overset{w}{\rightarrow}$ denotes weak convergence of probability
measures and $\delta$ is the Dirac delta measure. For a Borel set
$B$ such that $0\notin B$ define 
\[
f\left(r\right)=\left\{ \begin{array}{cc}
P\left(Y_{t-r}\in B\right) & 0\leq r\leq t\\
0 & t<r
\end{array}\right.,
\]
 and note that %
$P\left(Y_{0}\in dx\right)=\delta_{0}\left(dx\right)$ and therefore
$P\left(Y_{t-r}\in B\right)=0$ at $r=t$. Consequently, Lemma \ref{lem: continuity of probability}
suggests that $f\left(r\right)$ is continuous. By the fact that 
\begin{alignat*}{1}
P\left(Y_{t}^{t_{0}}\in B\right) & =\integral 0{\infty}f\left(r\right)p_{t_{0}}\left(r\right)dr\\
 & =\integral 0tP\left(Y_{t-r}\in B\right)p_{t_{0}}\left(r\right)dr\rightarrow P\left(Y_{t}\in B\right),
\end{alignat*}
we also have $P\left(Y_{t}^{t_{0}}=0\right)\rightarrow P\left(Y_{t}=0\right)$
and the proof is complete.\end{proof}
\begin{rem}
It was shown in \cite[eq. 4.1.12]{slater1960confluent} that $U\left(a,b,s\right)\sim Cs^{-a}$
as $s\rightarrow\infty$. It follows that
\begin{equation}
\hat{p}_{t_{0}}\left(s\right)\sim C\left(st_{0}\right)^{\alpha-1}\qquad t_{0}\rightarrow\infty\label{eq:Asymtotics of LT of GB}
\end{equation}
and therefore $\hat{p}_{t_{0}}\left(s\right)\rightarrow0$ as $t_{0}\rightarrow\infty$.
Hence, $p_{t_{0}}\overset{v}{\rightarrow}0$ as $t_{0}\rightarrow\infty$
where $\overset{v}{\rightarrow}$ denotes vague convergence of distributions,
and $P\left(Y_{t}^{t_{0}}\in dx\right)\overset{w}{\rightarrow}\delta_{0}\left(dx\right)$,
another proof for the fact that $P\left(Y_{t}^{t_{0}}\in B\right)\rightarrow0$
as $t_{0}\rightarrow\infty$ for $B$ such that $0\notin B$. It is
not hard to verify that $\hat{p}_{t_{0}}\rightarrow0$ as $\alpha\rightarrow0$.
Intuitively, this is expected since a small $\alpha$ suggests long
waiting times between jumps and that $Y_{t}$ is very subdiffusive. 
\end{rem}
Let $p\left(dx,t\right)$ be a stochastic kernel, that is, for every
$t>0$ $p\left(dx,t\right)$ is a probability measure on $\sigma\left(\mathbb{R}\right)$
and for each Borel set $B\subseteq\mathbb{R}$ $p\left(B,\cdot\right)$
is measureable. Denote the Fourier transform of $p\left(dx,t\right)$
by $\widetilde{p}\left(k,t\right)=\intop_{\mathbb{R}}e^{-ikx}p\left(dx,t\right)$,
and the Fourier-Laplace transform (FLT) by $\overline{p}\left(k,s\right)=\intop_{\mathbb{R}^{+}}\intop_{\mathbb{R}}e^{-st-ikx}p\left(dx,t\right)dt$.
{} Recall the definition of the Caputo $0<\alpha<1$ fractional derivative
of a function $f\left(t\right)$,
\begin{equation}
\partial_{t}^{\alpha}f=\frac{1}{\Gamma\left(1-\alpha\right)}\integral 0t\left(t-r\right)^{-\alpha}\frac{\partial f\left(r\right)}{\partial r}dr.\label{eq:Caputo derivative}
\end{equation}
 For $0<\alpha<1$ the Laplace transform of $\partial_{t}^{\alpha}f$
is (\cite[p. 39]{Meerschaert2011a}) 
\begin{alignat*}{1}
\widehat{\partial_{t}^{\alpha}f} & =s^{\alpha}\hat{f}-s^{\alpha-1}f\left(0+\right).
\end{alignat*}
A closely related operator is the Riemann Liouville derivative $\mathbb{D}_{t}^{\alpha}$
for $0<\alpha<1$, which is defined by
\begin{alignat}{1}
\mathbb{D}_{t}^{\alpha}f & =\frac{1}{\Gamma\left(1-\alpha\right)}\frac{\partial}{\partial t}\integral 0t\left(t-r\right)^{-\alpha}f\left(r\right)dr.\label{eq:Riemann Liouville drivative}
\end{alignat}
The LT of (\ref{eq:Riemann Liouville drivative}) can be shown to
be $\widehat{\mathbb{D}_{t}^{\alpha}f}=s^{\alpha}\hat{f}$. It follows
that 
\begin{equation}
\partial_{t}^{\alpha}f=\mathbb{D}_{t}^{\alpha}f-f\left(0+\right)\frac{t^{-\alpha}}{\Gamma\left(1-\alpha\right)}.\label{eq:relation between RL and Caputo}
\end{equation}
The following is a short summary of results in \cite{baeumer2005space}.
Let $V^{\omega}=L_{\omega}^{1}\left(\mathbb{R}\times\mathbb{R}_{+}\right)$
be the space of real valued measurable functions on $\mathbb{R}\times\mathbb{R}_{+}$
such that 
\[
\left\Vert f\right\Vert _{\omega}=\integral 0{\infty}\integral{\mathbb{R}^{d}}{}e^{-\omega t}\left|f\left(x,t\right)\right|dxdt<\infty,
\]
for some $\omega>0$. $V^{\omega}$ is a Banach space w.r to $\left\Vert \cdot\right\Vert _{\omega}$.
If $\left(A_{t},D_{t}\right)$ is a L�vy process where $D_{t}$ is
a subordinator and s.t $E\left(e^{-ikA_{t}-sD_{t}}\right)=e^{t\eta\left(-k,s\right)}$,
then the distribution of $\left(A_{t},D_{t}\right)$ gives way to
a semi group of operators whose infinitesimal generator $L'$ satisfies
$\overline{L'f}=\eta\left(-k,s\right)\overline{f}\left(k,s\right)$(for
$\omega\leq s$). In fact, $f$ is in the domain of $L'$, $D\left(L'\right)$,
iff $\overline{g}\left(k,s\right)=\eta\left(-k,s\right)\overline{f}\left(k,s\right)$
where $\overline{g}\left(k,s\right)$ is the FLT of some $g\in V^{\omega}$.
If the first and second order spatial weak derivatives as well as
the first order time weak derivative of $f$ is in $V^{\omega}$ then
$f\in D\left(L'\right)$. Let $p^{t_{0}}\left(dx,t\right)$ be the
probability measure of the process $Y_{t}^{t_{0}}$, i.e. $p^{t_{0}}\left(dx,t\right)=P\left(Y_{t}^{t_{0}}\in dx\right)$.
Suppose $A_{t}$ has the symbol $\psi\left(k\right)$ and the infinitesimal
generator $L$, and $D_{t}$ is an independent standard stable subordinator.
We then have $\eta\left(-k,s\right)=-s^{\alpha}+\psi\left(-k\right)$,
and $L'=-\mathbb{D}_{t}^{\alpha}+L$ (since $f\in V^{\omega}$, $Lf$
should be understood as $f\left(\cdot,t\right)\in D\left(A\right)$
for every $t>0$). Note that by \cite[Theorem 2.2]{Baeumer2001} smooth
functions on $\mathbb{R}$ are contained in $D\left(L\right)$. The
FLT of $p^{0}\left(dx,t\right)$ is well known(\cite[Eq. 4.43]{Meerschaert2011a})
and given by 
\begin{equation}
\overline{p^{0}}\left(k,s\right)=\frac{s^{\alpha-1}}{-\eta\left(-k,s\right)}=\frac{s^{\alpha-1}}{s^{\alpha}-\psi\left(-k\right)},\label{eq:FLT of CTRWL}
\end{equation}
which in turn implies that
\begin{alignat}{1}
\partial_{t}^{\alpha}p^{0}\left(dx,t\right) & =Lp^{0}\left(dx,t\right)\label{eq:Fokker Planck equation}\\
p^{0}\left(dx,0\right) & =\delta_{0}\left(dx\right).\nonumber 
\end{alignat}
 Equation (\ref{eq:Fokker Planck equation}) describes the dynamics
of $p^{0}\left(dx,t\right)$ and therefore is called the Fractional
Fokker Planck Equation(FFPE) of $p^{0}\left(dx,t\right)$. Suppose
that the process $Y_{t}^{t_{0}}$ starts from the random point $X_{0}$
with density $p\left(x\right)\in C_{c}^{\infty}\left(\mathbb{R}\right)$,
that is, smooth with compact support and that $X_{0}$ is independent
of $Y_{t}^{t_{0}}.$ The distribution of $Y_{t}^{t_{0}}+X_{0}$ is
$C\left(x,t\right)=\integral{\mathbb{R}}{}p\left(x-y\right)p^{t_{0}}\left(dy,t\right)$
which is again smooth.  The next theorem obtains the governing equation
of $C\left(x,t\right)$.
\begin{thm}
\label{thm:Fokker Planck Equation}Let $Y_{t}=A_{E_{t}}$ have probability
measure $p^{0}\left(dx,t\right)$ whose FLT is given by (\ref{eq:FLT of CTRWL})
for $0<\alpha<1$. Let $L$ be the generator of $A_{t}$\textup{.
Then we have
\begin{alignat}{1}
\partial_{t}^{\alpha}C\left(x,t\right) & =L\left(C\left(x,t\right)-p\left(x\right)\integral t{\infty}p_{t_{0}}\left(r\right)dr\right)\label{eq:FPDE of C(x,t)}\\
C\left(x,0\right) & =p\left(x\right).\nonumber 
\end{alignat}
}\end{thm}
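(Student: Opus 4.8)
The plan is to reduce (\ref{eq:FPDE of C(x,t)}) to an identity between Fourier--Laplace transforms and then to invert it inside the Banach space $V^{\omega}$ of \cite{baeumer2005space} recalled above. First I would make the structure of $C$ explicit. By Theorem \ref{thm: aging property} together with Remark \ref{remark:zero increment probability}, the law of the aging process splits as
\[
p^{t_0}\left(dx,t\right)=\left(\integral t{\infty}p_{t_0}\left(r\right)dr\right)\delta_{0}\left(dx\right)+\integral 0t p^{0}\left(dx,t-r\right)p_{t_0}\left(r\right)dr ,
\]
with $p^{0}\left(dx,t\right)=P\left(Y_{t}\in dx\right)$ keeping its own atom at the origin inside the second term. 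Convolving in space with $p\in C_{c}^{\infty}\left(\mathbb{R}\right)$ removes that atom and produces the smooth function
\[
C\left(x,t\right)=p\left(x\right)\integral t{\infty}p_{t_0}\left(r\right)dr+\integral 0t C^{0}\left(x,t-r\right)p_{t_0}\left(r\right)dr ,\qquad C^{0}\left(x,t\right):=\integral{\mathbb{R}}{}p\left(x-y\right)p^{0}\left(dy,t\right),
\]
i.e. $C^{0}$ is the density of $Y_{t}+X_{0}$. Since $\integral 0{\infty}p_{t_0}\left(r\right)dr=1$ we get $C\left(x,0\right)=p\left(x\right)$, and convolving (\ref{eq:Fokker Planck equation}) with $p$ (using that $L$ commutes with spatial translation) shows that $C^{0}$ solves $\partial_{t}^{\alpha}C^{0}=LC^{0}$, $C^{0}\left(\cdot,0\right)=p$; equivalently $\overline{C^{0}}\left(k,s\right)=\widetilde{p}\left(k\right)s^{\alpha-1}/\left(s^{\alpha}-\psi\left(-k\right)\right)$ by (\ref{eq:FLT of CTRWL}).

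Next I would transform. The Laplace transform in $t$ of $\integral t{\infty}p_{t_0}\left(r\right)dr$ equals $\left(1-\hat{p}_{t_0}\left(s\right)\right)/s$, and a time convolution becomes a product, so
\[
\overline{C}\left(k,s\right)=\widetilde{p}\left(k\right)\left(\frac{1-\hat{p}_{t_0}\left(s\right)}{s}+\frac{s^{\alpha-1}}{s^{\alpha}-\psi\left(-k\right)}\,\hat{p}_{t_0}\left(s\right)\right).
\]
On the other hand $\partial_{t}^{\alpha}C$ has Fourier--Laplace transform $s^{\alpha}\overline{C}\left(k,s\right)-s^{\alpha-1}\widetilde{p}\left(k\right)$ because $C\left(\cdot,0+\right)=p$, while $L$ acts in Fourier space as multiplication by its symbol $\psi\left(-k\right)$, so $L\bigl(C-p\integral t{\infty}p_{t_0}\left(r\right)dr\bigr)$ has transform $\psi\left(-k\right)\bigl(\overline{C}\left(k,s\right)-\widetilde{p}\left(k\right)\left(1-\hat{p}_{t_0}\left(s\right)\right)/s\bigr)$. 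Hence (\ref{eq:FPDE of C(x,t)}) is equivalent to the scalar identity
\[
\bigl(s^{\alpha}-\psi\left(-k\right)\bigr)\overline{C}\left(k,s\right)=s^{\alpha-1}\widetilde{p}\left(k\right)-\psi\left(-k\right)\widetilde{p}\left(k\right)\,\frac{1-\hat{p}_{t_0}\left(s\right)}{s}.
\]
Substituting the formula for $\overline{C}$ and multiplying by $s$, both sides collapse to $\widetilde{p}\left(k\right)\bigl(s^{\alpha}-\psi\left(-k\right)\left(1-\hat{p}_{t_0}\left(s\right)\right)\bigr)$; this is the routine algebra I would not spell out further.

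Finally I would invert, passing from equality of transforms back to (\ref{eq:FPDE of C(x,t)}) as an identity of functions. Because $p\in C_{c}^{\infty}\left(\mathbb{R}\right)$, the function $C\left(\cdot,t\right)$ is smooth with all spatial derivatives uniformly controlled, and the bound $\integral t{\infty}p_{t_0}\left(r\right)dr\leq 1$ together with (\ref{eq:FLT of CTRWL}) and the time-continuity supplied by Lemma \ref{lem: continuity of probability} should place $C$, its first two spatial weak derivatives and its first-order time weak derivative into $V^{\omega}$; by the summary of \cite{baeumer2005space} this makes $C\left(\cdot,t\right)$ lie in $D\left(L\right)$ (smooth functions are in $D\left(L\right)$ by \cite[Theorem 2.2]{Baeumer2001}) and makes $\partial_{t}^{\alpha}C$ and $L\bigl(C-p\integral t{\infty}p_{t_0}\left(r\right)dr\bigr)$ well-defined elements of $V^{\omega}$ determined uniquely by their Fourier--Laplace transforms. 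Uniqueness of the transform on $V^{\omega}$ then upgrades the scalar identity above to (\ref{eq:FPDE of C(x,t)}), the initial condition $C\left(x,0\right)=p\left(x\right)$ having been noted already. I expect the only real difficulty to be this last step: verifying the integrability and domain hypotheses of \cite{baeumer2005space} for $C$, and correctly bookkeeping the atom of $p^{t_0}$, which after smoothing by $p$ is exactly the correction $p\left(x\right)\integral t{\infty}p_{t_0}\left(r\right)dr$ that must be subtracted inside $L$ on the right-hand side of (\ref{eq:FPDE of C(x,t)}).
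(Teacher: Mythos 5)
Your proposal is correct and follows essentially the same route as the paper: it uses the convolution representation of $p^{t_0}\left(dx,t\right)$ from Remark \ref{remark:zero increment probability}, reduces (\ref{eq:FPDE of C(x,t)}) to the same Fourier--Laplace identity $\left(s^{\alpha}-\psi\left(-k\right)\right)\overline{C}\left(k,s\right)=s^{\alpha-1}\widetilde{p}\left(k\right)-\psi\left(-k\right)\widetilde{p}\left(k\right)\left(1-\hat{p}_{t_{0}}\left(s\right)\right)/s$ as equation (\ref{eq:FLT eq-1}), and inverts in $V^{\omega}$. The only cosmetic difference is that you work directly with the Caputo transform $s^{\alpha}\overline{C}-s^{\alpha-1}\widetilde{p}$ while the paper inverts the Riemann--Liouville form first and then applies (\ref{eq:relation between RL and Caputo}).
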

\begin{proof}
Let 
\begin{equation}
p\left(x,t\right)=C\left(x,t\right)-p\left(x\right)\integral t{\infty}p_{t_{0}}\left(r\right)dr,\label{eq:p(t,x)}
\end{equation}
and note that the FLT of (\ref{eq:p(t,x)}) is 
\begin{equation}
\overline{p}\left(k,s\right)=\overline{C}\left(k,s\right)-\widetilde{p}\left(k\right)\left(\frac{1}{s}-\frac{1}{s}\hat{p}_{t_{0}}\left(s\right)\right).\label{eq:FLT of p(t,x)}
\end{equation}
By Remark \ref{remark:zero increment probability} we have 
\begin{equation}
p\left(x,t\right)=\integral{\mathbb{R}}{}p\left(x-y\right)\integral 0tp^{0}\left(dy,t-r\right)p_{t_{0}}\left(r\right)dr.\label{eq: unnormalized p(x,t)}
\end{equation}
By a general version of Fubini's Theorem \cite[Theorem 2.6.4]{ash1972real}
we have 
\begin{alignat}{1}
\intop_{\mathbb{R}}e^{-ikx}\integral 0tp^{0}\left(dx,t-r\right)p_{t_{0}}\left(r\right)dr & =\integral 0t\widetilde{p}^{0}\left(k,t-r\right)p_{t_{0}}\left(r\right)dr.\label{eq:FT of p t1(t,x)}
\end{alignat}
Take the LT of both sides of equation (\ref{eq:FT of p t1(t,x)})
to obtain 
\[
\intop_{\mathbb{R}^{+}}\intop_{\mathbb{R}}e^{-st-ikx}\integral 0tp^{0}\left(dx,t-r\right)p_{t_{0}}\left(r\right)dr=\overline{p}^{0}\left(k,s\right)\widehat{p}_{t_{0}}\left(s\right).
\]
It follows that 
\[
\overline{p}\left(k,s\right)=\overline{p}^{0}\left(k,s\right)\widehat{p}_{t_{0}}\left(s\right)\widetilde{p}\left(k\right)
\]
Since by (\ref{eq:FLT of CTRWL}) $\overline{p}^{0}\left(k,s\right)=\frac{s^{\alpha-1}}{s^{\alpha}-\psi\left(-k\right)}$
we have
\begin{equation}
\overline{p}\left(k,s\right)=\frac{s^{\alpha-1}}{s^{\alpha}-\psi\left(-k\right)}\widehat{p}_{t_{0}}\left(s\right)\widetilde{p}\left(k\right).\label{eq:FLT eq}
\end{equation}
Substitute (\ref{eq:FLT of p(t,x)}) in (\ref{eq:FLT eq}) to obtain
\[
\overline{C}\left(k,s\right)s^{\alpha}-\overline{C}\left(k,s\right)\psi\left(-k\right)-\widetilde{p}\left(k\right)\left(\frac{1}{s}-\frac{1}{s}\widehat{p}_{t_{0}}\right)s^{\alpha}+\widetilde{p}\left(k\right)\left(\frac{1}{s}-\frac{1}{s}\widehat{p}_{t_{0}}\right)\psi\left(-k\right)=s^{\alpha-1}\widehat{p}_{t_{0}}\left(s\right)\widetilde{p}\left(k\right),
\]
which can be rearranged to obtain
\begin{equation}
\overline{C}\left(k,s\right)\left(s^{\alpha}-\psi\left(-k\right)\right)=\widetilde{p}\left(k\right)s^{\alpha-1}-\left(\frac{1}{s}-\frac{1}{s}\widehat{p}_{t_{0}}\right)\widetilde{p}\left(k\right)\psi\left(-k\right).\label{eq:FLT eq-1}
\end{equation}
By the preceding discussion the right hand side of \ref{eq:FLT eq-1}
inverts to a function in $V^{\omega}$, taking the IFLT of (\ref{eq:FLT eq-1})
we have 
\begin{equation}
\mathbb{D}_{t}^{\alpha}C\left(x,t\right)-LC\left(x,t\right)=p\left(x\right)\frac{t^{-\alpha}}{\Gamma\left(1-\alpha\right)}-Lp\left(x\right)\integral t{\infty}p_{t_{0}}\left(r\right)dr.\label{eq:FLT eq-2}
\end{equation}
Noting that $C\left(x,0^{+}\right)=p\left(x\right)$ one can rewrite
(\ref{eq:FLT eq-2}) by using (\ref{eq:relation between RL and Caputo})
to arrive at (\ref{eq:FPDE of C(x,t)}). \end{proof}
\begin{rem}
Although Equation (\ref{eq:FPDE of C(x,t)}) is not an abstract Cauchy
problem, one may adopt the concept of a mild solution from \cite[Chapter 4]{pazy2012semigroups}
and use it in our case. Let $f\in L^{1}\left(\mathbb{R}\right)$,
we say that a function $C\left(x,t\right)=f*p^{t_{0}}\left(dx,t\right)=\integral{\mathbb{R}}{}f\left(x-y\right)p^{t_{0}}\left(dy,t\right)$
is a mild solution of 
\begin{alignat}{1}
\partial_{t}^{\alpha}C\left(x,t\right) & =L\left(C\left(x,t\right)-f\left(x\right)\integral t{\infty}p_{t_{0}}\left(r\right)dr\right)\label{eq:mild solution1}\\
C\left(x,0\right) & =f\left(x\right)\nonumber 
\end{alignat}
 if there exists a sequence $\phi_{n}\in D\left(L\right)$ s.t $\phi_{n}\overset{L^{1}}{\rightarrow}f$
(this implies that $\phi_{n}*p^{t_{0}}\left(dx,t\right)\overset{L^{1}}{\rightarrow}f*p^{t_{0}}\left(dx,t\right)$
uniformly in $t$ on bounded sets as $p^{t_{0}}\left(dx,t\right)$
is a contraction for every $t$). From Theorem \ref{thm:Fokker Planck Equation}
and the fact that $C_{c}^{\infty}\left(\mathbb{R}\right)$ is dense
in $L^{1}\left(\mathbb{R}\right)$, we conclude that every $f\in L^{1}\left(\mathbb{R}\right)$
is a mild solution of (\ref{eq:mild solution1}). We then write for
simplicity
\begin{alignat}{1}
\partial_{t}^{\alpha}p^{t_{0}}\left(dx,t\right) & =L\left(p^{t_{0}}\left(dx,t\right)-\delta_{0}\left(dx\right)\integral t{\infty}p_{t_{0}}\left(r\right)dr\right)\label{eq:mild solution2}\\
p^{t_{0}}\left(dx,0\right) & =\delta_{0}\left(dx\right).\nonumber 
\end{alignat}
Since (\ref{eq:mild solution2}) describes the dynamics of the probability
kernel $p^{t_{0}}\left(dx,t\right)$ we call it its FFPE.
\end{rem}

\begin{rem}
Theorem \ref{thm:Fokker Planck Equation} shows that the dynamics
of $p^{t_{0}}\left(dx,t\right)$ are the same as those of $p^{0}\left(dx,t\right)$
on $\mathbb{R}/\{0\}\times[0,\infty)$. There is a nice intuitive
interpretation to equation (\ref{eq:mild solution2}) when $A_{t}$
is a stable process. Equation (\ref{eq:FPDE of C(x,t)}) can be explained
as the behavior of a plume of particles by arguments of conservation
of mass and Fick's law (\cite[Remark 2.3]{Meerschaert2011a} and \cite[Section 16.1]{zaslavsky2008hamiltonian}).
However, note that the portion of the mass of particles that does
not diffuse away from point $x=0$ at time $t$ (and therefore does
not contribute to the change in $p^{t_{0}}\left(dx,t\right)$ over
time) is $\integral t{\infty}p_{t_{0}}\left(r\right)dr$ by Remark
\ref{remark:zero increment probability} and the fact that stable
processes have pdf. This accounts for the difference between (\ref{eq:mild solution2})
and (\ref{eq:Fokker Planck equation}).
\end{rem}

\begin{rem}
In \cite{Barkai2003a}, a deterministic system was modeled by a CTRW
and its aging properties were studied. There, the FFPE was given for
the unnormalized distribution (\ref{eq:p(t,x)}) of the aging process
when $t_{0}$ and $t$ are large. To see that the results agree, simply
plug $\psi\left(-k\right)=\frac{-k^{2}}{2A}$ in (\ref{eq:FLT eq})
and take the IFLT of both sides of the equation. Since $\overline{p}\left(k,s\right)s^{\alpha}$
is the FLT of the fractional Riemann-Liouville $\alpha$ derivative
we obtain \cite[eq. 18]{Barkai2003a}.
\end{rem}

\bibliographystyle{abbrv}
\bibliography{ACTRWL}

\begin{thebibliography}{10}

\bibitem{Applebaum2009}
David Applebaum.
\newblock {\em L{\'e}vy processes and stochastic calculus}.
\newblock Cambridge university press, 2009.

\bibitem{ash1972real}
Robert~B. Ash.
\newblock Real analysis and probability, 1972.
\newblock {\em Probability and Mathematical Statistics}.

\bibitem{baeumer2005space}
Boris Baeumer, Mark Meerschaert, and Jeff Mortensen.
\newblock Space-time fractional derivative operators.
\newblock {\em Proceedings of the American Mathematical Society},
  133(8):2273--2282, 2005.

\bibitem{Baeumer2001}
Boris Baeumer and Mark~M Meerschaert.
\newblock Stochastic solutions for fractional cauchy problems.
\newblock {\em Fractional Calculus and Applied Analysis}, 4(4):481--500, 2001.

\bibitem{Barkai2003a}
Eli Barkai.
\newblock Aging in subdiffusion generated by a deterministic dynamical system.
\newblock {\em Physical review letters}, 90(10):104101, 2003.

\bibitem{Barkai2003}
Eli Barkai and Yuan-Chung Cheng.
\newblock Aging continuous time random walks.
\newblock {\em The Journal of chemical physics}, 118(14):6167--6178, 2003.

\bibitem{becker2004limit}
Peter Becker-Kern, Mark~M Meerschaert, and Hans-Peter Scheffler.
\newblock Limit theorems for coupled continuous time random walks.
\newblock {\em Annals of Probability}, pages 730--756, 2004.

\bibitem{Beghin2009}
Luisa Beghin and Enzo Orsingher.
\newblock Fractional poisson processes and related planar random motions.
\newblock {\em Electron. J. Probab}, 14(61):1790--1826, 2009.

\bibitem{bingham1989regular}
Nicholas~H Bingham, Charles~M Goldie, and Jef~L Teugels.
\newblock {\em Regular variation}, volume~27.
\newblock Cambridge university press, 1989.

\bibitem{embrechts2009selfsimilar}
Paul Embrechts.
\newblock {\em Selfsimilar processes}.
\newblock Princeton University Press, 2009.

\bibitem{Erickson1970}
Bruce~K. Erickson.
\newblock Strong renewal theorems with infinite mean.
\newblock {\em Transactions of the American Mathematical Society},
  151(1):263--291, 1970.

\bibitem{feller1971introduction}
William Feller.
\newblock {\em An Introduction to Probability Theory and Its Applications:
  Volumes I \& II: Second Edition}.
\newblock John Wiley \& Sons, 1971.

\bibitem{Jumarie2001}
Guy Jumarie.
\newblock Fractional master equation: non-standard analysis and
  liouville--riemann derivative.
\newblock {\em Chaos, Solitons \& Fractals}, 12(13):2577--2587, 2001.

\bibitem{Jurlewicz2012}
Agnieszka Jurlewicz, P~Kern, Mark~M Meerschaert, and H-P Scheffler.
\newblock Fractional governing equations for coupled random walks.
\newblock {\em Computers \& Mathematics with Applications}, 64(10):3021--3036,
  2012.

\bibitem{kallenberg2002foundations}
Olav Kallenberg.
\newblock {\em Foundations of modern probability}.
\newblock Springer Science \& Business Media, 2002.

\bibitem{Laskin2003}
Nick Laskin.
\newblock Fractional poisson process.
\newblock {\em Communications in Nonlinear Science and Numerical Simulation},
  8(3):201--213, 2003.

\bibitem{Mainardi2007a}
Francesco Mainardi, Rudolf Gorenflo, and Enrico Scalas.
\newblock A fractional generalization of the poisson processes.
\newblock {\em arXiv preprint math/0701454}, 2007.

\bibitem{Mainardi2007}
Francesco Mainardi, Rudolf Gorenflo, and Alessandro Vivoli.
\newblock Beyond the poisson renewal process: A tutorial survey.
\newblock {\em Journal of Computational and Applied Mathematics},
  205(2):725--735, 2007.

\bibitem{Meerschaert2011}
Mark~M Meerschaert, Erkan Nane, and P~Vellaisamy.
\newblock The fractional poisson process and the inverse stable subordinator.
\newblock {\em Electron. J. Probab}, 16(59):1600--1620, 2011.

\bibitem{Meerschaert2008}
Mark~M Meerschaert and Hans-Peter Scheffler.
\newblock Triangular array limits for continuous time random walks.
\newblock {\em Stochastic processes and their applications}, 118(9):1606--1633,
  2008.

\bibitem{Meerschaert2004}
Mark~M Meerschaert, Hans-Peter Scheffler, et~al.
\newblock Limit theorems for continuous-time random walks with infinite mean
  waiting times.
\newblock {\em Journal of applied probability}, 41(3):623--638, 2004.

\bibitem{Meerschaert2011a}
Mark~M Meerschaert and Alla Sikorskii.
\newblock {\em Stochastic models for fractional calculus}, volume~43.
\newblock Walter de Gruyter, 2011.

\bibitem{Meerschaert2012}
Mark~M Meerschaert and Peter Straka.
\newblock Fractional dynamics at multiple times.
\newblock {\em Journal of statistical physics}, 149(5):878--886, 2012.

\bibitem{Meerschaert2013}
Mark~M Meerschaert and Peter Straka.
\newblock Inverse stable subordinators.
\newblock {\em Mathematical modelling of natural phenomena}, 8(02):1--16, 2013.

\bibitem{Meerschaert}
Mark~M Meerschaert, Peter Straka, et~al.
\newblock Semi-markov approach to continuous time random walk limit processes.
\newblock {\em The Annals of Probability}, 42(4):1699--1723, 2014.

\bibitem{monthus1996models}
C{\'e}cile Monthus and Jean-Philippe Bouchaud.
\newblock Models of traps and glass phenomenology.
\newblock {\em Journal of Physics A: Mathematical and General}, 29(14):3847,
  1996.

\bibitem{pazy2012semigroups}
Amnon Pazy.
\newblock {\em Semigroups of linear operators and applications to partial
  differential equations}, volume~44.
\newblock Springer Science \& Business Media, 2012.

\bibitem{Politi2011}
Mauro Politi, Taisei Kaizoji, and Enrico Scalas.
\newblock Full characterization of the fractional poisson process.
\newblock {\em EPL (Europhysics Letters)}, 96(2):20004, 2011.

\bibitem{Repin2000}
ON~Repin and AI~Saichev.
\newblock Fractional poisson law.
\newblock {\em Radiophysics and Quantum Electronics}, 43(9):738--741, 2000.

\bibitem{royden2010real}
Halsey~Lawrence Royden, Patrick Fitzpatrick, and Prentice Hall.
\newblock {\em Real analysis}, volume 198.
\newblock Macmillan New York, 2010.

\bibitem{sato1999levy}
Ken-Iti Sato.
\newblock {\em L{\'e}vy processes and infinitely divisible distributions},
  volume~68.
\newblock Cambridge university press, 1999.

\bibitem{Schulz2013a}
Johannes~HP Schulz, Eli Barkai, and Ralf Metzler.
\newblock Aging renewal theory and application to random walks.
\newblock {\em Physical Review X}, 4(1):011028, 2014.

\bibitem{slater1960confluent}
Lucy~Joan Slater.
\newblock {\em Confluent hypergeometric functions}.
\newblock University Press Cambridge, 1960.

\bibitem{Straka2011diss}
Peter Straka.
\newblock {\em Continuous Time Random Walk Limit Processes - Stochastic Models
  for Anomalous Diffusion}.
\newblock Phd dissertation, University of New South Wales, 2011.

\bibitem{straka2011}
Peter Straka and Bruce~Ian Henry.
\newblock Lagging and leading coupled continuous time random walks, renewal
  times and their joint limits.
\newblock {\em Stochastic Processes and their Applications}, 121(2):324--336,
  2011.

\bibitem{struik1978physical}
Leendert Cornelis~Elisa Struik.
\newblock {\em Physical aging in amorphous polymers and other materials}.
\newblock 1977.

\bibitem{zaslavsky2008hamiltonian}
George~M Zaslavsky.
\newblock {\em Hamiltonian chaos and fractional dynamics}.
\newblock Oxford Univ. Press, 2008.

\bibitem{Zolotarev86}
Vladimir~M Zolotarev.
\newblock {\em One-dimensional stable distributions}, volume~65.
\newblock American Mathematical Soc., 1986.

\end{thebibliography}

\end{document}